\documentclass[a4paper]{llncs}
\usepackage{llncsdoc}
\usepackage[T1]{fontenc}
\usepackage[utf8]{inputenc}
\usepackage{graphicx}
\usepackage{hyperref}
\usepackage{amssymb}
\usepackage{amsmath,bm}
\usepackage{lmodern} %for various sizes
\usepackage{textcomp} %remove size warnings
\usepackage{sectsty}
\usepackage{fancybox}
\usepackage{listings}
\usepackage{charter}
\usepackage[titletoc]{appendix}
\usepackage{epsfig}
\usepackage{tabularx}
\usepackage{authblk}
\usepackage{enumitem}
\usepackage[ruled,vlined]{algorithm2e}
\usepackage{latexsym}
\usepackage{alltt}
\usepackage{setspace}
\usepackage{datetime}
\newdateformat{mydate}{\monthname[\THEMONTH], \THEYEAR}
\def\qed{\hfill$\Box$}

%%%%%
%%NEW ENVIRONMENTS
%\def\qed{\hfill$\Box$}
%\def\proof{{\noindent\bf Proof.~}}
%\def\corollary{{\bf Corollary.~}}
%\newtheorem{proof}{Proof}
%\newtheorem{corollary}{Corollary~}
%\newtheorem{defn}{\textbf{Definition}}
%\newtheorem{notme}{\textbf{Note}}
%\newtheorem{theorem}{Theorem~}
%\newtheorem{proposition}{\textbf{Proposition}}
%\newtheorem{example}{Example~}

\newtheorem{observation}{\textbf{Observation}}

%\newtheorem{conjecture}{Conjecture~}
%\newtheorem{lemma}{Lemma~}
%\def\abstract{{\begin{center}
%\Large {\bf Abstract}
%\end{center} }}

%\renewcommand{\thenotme}{\textbf{\arabic{notme}}}

%\renewcommand{\theconjecture}{\arabic{conjecture}}
%%\renewcommand{\theproof}{\arabic{proof}}
%\renewcommand{\thelemma}{\arabic{lemma}}
%\renewcommand{\thefigure}{\arabic{figure}}

%\newtheorem{proposition}[Theorem]{Proposition}
%\newtheorem{corollary}[Theorem]{Corollary}
%\DefineVerbatimEnvironment{code}{Verbatim}{fontsize=\small}
%\DefineVerbatimEnvironment{example}{Verbatim}{fontsize=\small}

%%%%
%
%Dept. of Computer Science and Engineering, IIT Kharagpur 721302, India
%\institute{Indian Institute of Technology, Kharagpur}

\usepackage[showframe=false, left=3.75cm, top=3cm, right=3cm, bottom=3cm ]{geometry}
\makeatletter

\def\@seccntformat#1{\@ifundefined{#1@cntformat}%
   {\csname the#1\endcsname\quad}  % default
   {\csname #1@cntformat\endcsname}% enable individual control
}
\let\oldappendix\appendix %% save current definition of \appendix
\renewcommand\appendix{%
    \oldappendix
    \newcommand{\section@cntformat}{\appendixname~\thesection\quad}
}
\makeatother

\begin{document}

\renewcommand\Authands{ and }
\title{Induced-bisecting families of bicolorings for hypergraphs}
%\author{Niranjan Balachandran \thanks{niranj@iitb.ac.in}\inst{1} \and
%Rogers Mathew \thanks{rogers@cse.iitkgp.ernet.in} \inst{2}
%Tapas Kumar Mishra \thanks{tkmishra@cse.iitkgp.ernet.in}\inst{2}
%Sudebkumar Prasant Pal \thanks{spp@cse.iitkgp.ernet.in}\inst{2}}
%
%\institute {Department of Mathematics, Indian Institute of Technology, Bombay 400076, India \and
%Department of Computer Science and Engineering, Indian Institute of Technology, Kharagpur 721302, India}

\author{Niranjan Balachandran$^1$\and
Rogers Mathew$^2$\and
Tapas Kumar Mishra$^2$ \and \\
Sudebkumar Prasant Pal$^2$}

\institute {Department of Mathematics, Indian Institute of Technology, Bombay 400076, India \email{niranj@iitb.ac.in}\and
Department of Computer Science and Engineering, Indian Institute of Technology, Kharagpur 721302, India \\
\email{\{rogers,tkmishra,spp\}@cse.iitkgp.ernet.in}}

%\email{niranj@iitb.ac.in \and {rogers,tkmishra,spp}}
\maketitle
%\doublespacing
\begin{abstract}
%Let $G(V,E)$ is a hypergraph be a hypergraph with vertex set $V$ and hyperedge set $E$, and
%$C=\{X_1,...,X_t\}$ is a fixed set of $t$ bicolorings on the vertices of $V$, i.e.,
%$X_i:V \rightarrow \{-1,+1\}$, $1 \leq i \leq t$.
%For a fixed coloring $X_i \in C$ and for a hyperedge $e \in E$, let $\chi(e,X_i)= \sum_{v \in e} X_i(v)$.
%The multiple-coloring discrepancy of a hyperedge $e \in E$, $mDisc(e,C)$, under $C$ is the minimum discrepancy of $e$
%over all bicolorings in $C$ i.e. $mDisc(e,C) = min_{X \in C} |\chi(e,X)|$.
%The multiple-coloring discrepancy of the hypergraph  $G$, $mDisc(G,C)$, under $C$ is the maximum of $mDisc(e,C)$
%over the hyperedges $e$ in $E$, i.e. $mDisc(G,C) = max_{e \in G} mDisc(e,C)$.
%The minimum of $mDisc(G,C)$ over all $t$-sized sets of bicolorings is called as the $t$-multiple-coloring discrepancy of the hypergraph $mDisc(G,t)$, i.e.
%\begin{align*}
%mDisc(G,t)= min_{|C|=t} max_{e \in E} min_{X \in C} |\chi(e,X)|.
%\end{align*}
%A $t$-sized set of bicolorings such that the $t$-multiple-coloring discrepancy $mDisc(G,t)$ of the hypergraph 
%$G$ is 0 or 1 is called as a multiple-coloring discrepancy cover for $G$.
%The minimum number $t$ of bicolorings 
%that is  a multiple-coloring discrepancy cover for $G$ is called as the multiple-coloring discrepancy cover
%number $\chi^d(G)$.
%TOBE ADDED
%
Two $n$-dimensional vectors $A$ and $B$, $A,B \in \mathbb{R}^n$,
 are said to be \emph{trivially orthogonal} if in every coordinate $i \in [n]$, at least one of $A(i)$ or $B(i)$ is zero. 
 Given the $n$-dimensional Hamming cube $\{0,1\}^n$, we study the minimum cardinality of a set $\mathcal{V}$ of $n$-dimensional  $\{-1,0,1\}$ vectors, each containing exactly $d$ non-zero entries, such that every 
 `possible'  point $A \in \{0,1\}^n$ in the Hamming cube has some $V \in \mathcal{V}$ which is orthogonal, but not trivially orthogonal, to $A$. We give asymptotically tight lower and (constructive) upper bounds for such a set $\mathcal{V}$ except for the even values of $d \in \Omega(n^{0.5+\epsilon})$, for any $\epsilon$, $0< \epsilon \leq 0.5$.
%if 
\end{abstract}
Keywords: Hamming cube, Covering, Hyperplane, Bicoloring

%%%%%%%%%%%%% END ABSTRACT%%%%%%%%%%%%%%%%%%%%%

%%%%%%%%%%%%% BEGIN INTRO%%%%%%%%%%%%%%%%%

\section{Introduction}
\label{sec:intro}

Two $n$-dimensional vectors $A$ and $B$, $A,B \in \mathbb{R}^n$, are said to be \emph{trivially orthogonal} if in every coordinate $i \in [n]$, at least one of $A(i)$ or $B(i)$ is zero. 
The vectors $A$ and $B$ are \emph{non-trivially orthogonal} if they are orthogonal, but not trivially orthogonal.
Consider the following problem:
"Given the $n$-dimensional Hamming cube $\{0,1\}^n$, what is the minimum cardinality of a subset $\mathcal{V}$ of  $n$-dimensional  $\{-1,0,1\}$ vectors, each containing exactly $d$ non-zero entries,  such that every point $A \in \{0,1\}^n$ in the Hamming cube has some $V \in \mathcal{V}$ which is  non-trivially orthogonal to $A$?".
It is not hard to see that the all-zero  vector and the unit vectors $\{(1,0,\ldots,0), (0,1,\ldots,0), \ldots, (0,0,\ldots,1)\}$
can never have any non-trivially orthogonal vector in $\{-1,0,1\}^n$.
Additionally, the all-ones vector $(1,\ldots,1)$ cannot be non-trivially orthogonal to any vector in $\{-1,0,1\}^n$ consisting of exactly $d$ non-zero entries, when $d$ is odd.
We call the vectors $(0,\ldots,0), (1,0,\ldots,0), \ldots, (0,0,\ldots,1)$ (and additionally, $(1,\ldots,1)$ when $d$ is odd) as \textit{trivial}.
Since no $n$-dimensional $\{-1,0,1\}$ vector with exactly one non-zero entry
is non-trivially orthogonal to any non-trivial point of the Hamming cube,
we assume that $d \geq 2$ in the rest of the paper.

\begin{definition}\label{def:1}
Let $2 \leq d \leq n$, where $d$ and $n$ are integers. We define
$\beta^d(n)$ as the minimum cardinality of a subset $\mathcal{V}$ of $n$-dimensional  $\{-1,0,1\}$ vectors, each containing exactly $d$ non-zero entries, such that every non-trivial point in the Hamming cube $\{0,1\}^n$ has a non-trivially orthogonal vector $V \in \mathcal{V}$.
\end{definition}
In this paper, we study the problem of estimation of bounds for $\beta^d(n)$.

%We consider the following general version of the problem defined in terms of `discrepancy' of hypergraphs.
%Let $G(V,E)$  be a hypergraph with vertex set $V=\{v_1,\ldots,v_n\}$ and hyperedge set $E=\{e_1,\ldots,e_m\}$.
%Given a bicoloring $X$, $X : V \rightarrow \{-1,+1\}$,
%let $\mathbb{C}_{X}(e)= |\sum_{v \in e} X(v)|$ denote the discrepancy of the hyperedge $e$ under the bicoloring $X$.
%Then, the \emph{discrepancy} of the hypergraph $G$, denoted by  $disc(G)$, is defined as
%$disc(G)= \min_{X} \max_{e \in E} \mathbb{C}_X(e)$.
%For definitions, results, and extensions of discrepancy and related problems,
%see \cite{chaz2000,mat1999,chen2014,beck1996}.
We now define a general version of the 
%problem of non-trivial orthogonality of points on the Hamming cube 
aforementioned problem in terms of bicolorings of a hypergraph.
Let $G$ be a hypergraph on the vertex set $[n]$.
Corresponding to the trivial vectors/points of the Hamming cube, the singleton sets and the empty set (and additionally, the set $[n]$ when $d$ is odd) are the \emph{trivial hyperedges} or \emph{trivial subsets} of $[n]$.
Let $X^S$ denote a $\pm 1$ bicoloring of vertices of $S \subseteq [n]$,
i.e.  $X^S : S \rightarrow \{+1,-1\}$, for some $S \subseteq [n]$.
We abuse the notation to denote the subset of vertices colored with +1 (-1) with 
respect to bicoloring $X^S$ as $X^S(+1)$ (resp., $X^S(-1)$).
%Let $|X^S|$ denote the cardinality of the subset of points colored under $X$.
%For a hyperedge $A \in  E(G)$, let $A_{|S}$ denote $A \cap S$ - the hyperedge $A$ induced on the subset $S \subseteq [n]$.
%The hypergraph induced by $S$ on $G$, denoted as $G_{|S}$, 
%is the hypergraph with vertex set $V(G_{|S})=S$ and
%hyperedge set $E(G_{|S})=\{A_{|S}| A \in E(G)\}$.
\begin{definition}\label{def:2}
Given a hypergraph $G$,
a hyperedge $A \in E(G)$ is said to be \emph{induced-bisected} by a bicoloring $X^S$ of a subset $S \subseteq V(G)$,
if $|A \cap X^S(+1)|=|A \cap X^S(-1)| \neq 0$.
%Observe that $A_{|S}$ has a discrepancy 0 with respect to a bicoloring $X^S$ when $A$ is induced bisected by $X^S$.
% WHY 0 DISC - WHY NOT -1 1
A set $ \mathcal{X}=\{X^{S_1},\ldots, X^{S_t}\}$ of $t$ bicolorings is called an \emph{induced-bisecting family of order $d$} for $G$ if
\begin{enumerate}
	\item each $S_i \subseteq [n]$ has exactly $d$ vertices, $1 \leq i \leq t$, $2 \leq d \leq n$, and
	\item every non-trivial hyperedge $A \in E(G)$ is induced-bisected by at least one $X^{S_i}$, $1 \leq i \leq t$.
\end{enumerate}
Let $\beta^d(G)$ denote the minimum cardinality of an induced-bisecting family of order $d$ for hypergraph $G$.
\end{definition}

From Definitions \ref{def:1} and \ref{def:2}, it is clear that
%We define 
the maximum of $\beta^d(G)$ over all hypergraphs $G$ on $[n]$ is $\beta^d(n)$.
% and
%the maximum of $\beta^d(G)$ over all $k$-uniform hypergraphs $G$ on $[n]$ is $\beta^d(n,k)$.

\begin{example}
	 Let $\mathcal{H}$ be the hypergraph with all the $2^n-n-1$ non-trivial subsets of $[n]$ as hyperedges and let $d=2$.
	 For any $S \in \binom{[n]}{2}$, let $X^S$ color one point in $S$ with color +1 and the other with -1.
	 %Let $\mathcal{S}$ be the collection of all the $\binom{n}{2}$ pairs of $[n]$ and each subset of $\mathcal{F}'$ 
	 %consist of one element from each pair in $\mathcal{S}$.
	 %Observe that $(\mathcal{S},\mathcal{F}')$ is an induced-bisection family for $\mathcal{F}$, i.e.,
	 Observe that $\mathcal{X}=\{X^S|S \in \binom{[n]}{2}\}$ forms an induced-bisecting family of order 2 for $\mathcal{H}$.
	 $\beta^2(\mathcal{H}) \leq \binom{n}{2}$. 
	 Moreover, this upper bound is also tight: if $X^{\{a,b\}} \not \in \mathcal{X}$, $\{a,b\} \in \binom{[n]}{2}$, then
	 the hyperedge $\{a,b\} \in\mathcal{H} $ cannot be induced-bisected.	
\end{example}

\subsection{Relations to existing work}

The problem addressed in this paper can be viewed as a generalization of the problem of \emph{bisecting families}\cite{Niranj2016}.
Let $n \in \mathbb{N}$ and let $\mathcal{A}$ be a family of subsets of $[n]$.
Another family $\mathcal{B}$ of subsets of $[n]$ is called a \emph{bisecting family} for $\mathcal{A}$,
if for each  $A \in \mathcal{A}$, there exists a $B \in \mathcal{B}$
such that $|A \cap B| \in \{\lceil \frac{|A|}{2}\rceil,\lfloor \frac{|A|}{2}\rfloor\}$.
In the bicoloring terminology,
letting $S=[n]$, $X^S(+1)=B$, $X^S(-1)=[n]\setminus B$,
the bisecting family $\mathcal{B}$ maps to a collection $\mathcal{X}$ of bicolorings such that 
for each $A \in \mathcal{A}$, there exists a bicoloring $X \in \mathcal{X}$
such that $|A \cap X(+1)|-|A \cap X(-1)| \in \{-1,0,1\}$.
In \cite{Niranj2016}, the authors define
$\beta_{[\pm 1]}(n)$ as the minimum cardinality of a bisecting family for
the family consisting of all the non-empty subsets of $[n]$, and they 
prove that $\beta_{[\pm 1]}(n)=\lceil \frac{n}{2}\rceil$ \cite{Niranj2016}.
Note that when $d=n$ and $\mathcal{A}_e$ denotes the family of non-trivial even subsets of $[n]$, any induced bisecting family of order $d$ for $\mathcal{A}_e$ is a bisecting family for the family consisting of all the non-empty subsets of $[n]$.
In other words,
$\beta^n(\mathcal{A}_e)=\beta_{[\pm 1]}(n)$.
However, observe that when $d=n$, i.e. $S=[n]$, no odd subset of $[n]$ can be induced-bisected: 
this follows from the fact that for any odd subset $A$,
$|A \cap X^S(+1)|-|A \cap X^S(-1)|$ is odd.

An affine hyperplane is a set of vectors $H(a,b)=\{x \in \mathbb{R}^n: \left< a,x\right>=b\}$,
where $a \in \mathbb{R}^n$, $b \in \mathbb{R}$. Covering the $\{0,1\}^n$ Hamming cube
with the minimum number of affine hyperplanes has been well studied - a point $x \in \{0,1\}^n$ is said to be \emph{covered} by a hyperplane
$H(a,b)$ if $\left< a,x\right>=b$.
Without any further restriction, note that $H(e_1,0)$ and $H(e_1,1)$
covers every point on the $\{0,1\}^n$ Hamming cube, where $e_1=(1,0,\ldots,0)$ is the first unit vector.
Alon and F\"{u}redi \cite{Alon1993} show that the covering-by-hyperplanes problem becomes substantially nontrivial 
under the restriction that only the nonzero vectors are covered.
%However, if we put the restriction that the all zero vector must remain uncovered, 
They demonstrated, using the notion of 
Combinatorial Nullstellensatz \cite{AlonNull1999}, that
we need at least $n$ affine hyperplanes when the zero vector remains uncovered.
 This can be achieved by the set of hyperplanes $\{H(e_i,1)\}$, where $e_i$ is the $i$th unit vector, $1 \leq i \leq n$. 
  Many other extensions of this covering problem involving other restrictions have been studied in detail (see 
  \cite{Linial2005,sax2013,saks1993}).
The problem of bisecting families \cite{Niranj2016}
%Our problem 
imposes the following constraints on the minimum cardinality set of covering hyperplanes $\{H_i(a_i,b_i)\}$:
(i) $b_i \in \{-1,0,1\}$;
(ii) $a_i \in \{-1,1\}^n$.
The problem of induced-bisecting families 
puts stronger restrictions not just on the hyperplanes, but also on the definition of `covering' by
%on the coverings and on the minimum cardinality set of covering 
a hyperplane $\{H_i(a_i,b_i)\}$:
(i) $b_i=0$;
(ii) $a_i$ consists of exactly $d$ non-zero coordinates, $a_i \in \{-1,0,1\}^n$ and $d \in [n]$;
(iii) we say a point $x$ is \emph{covered} by a hyperplane $H(a,b)$ when $a$ is nontrivially orthogonal to $x$.

\subsection*{Main result}

In this paper, we establish the following theorem.

\begin{theorem}\label{thm:main}
	Let $2 \leq d \leq n$, where $d$ and $n$ are integers. Then,
	$\frac{2n(n-1)}{d^2} \leq \beta^d(n) \leq \binom{\lceil\frac{2(n-1)}{d-1} \rceil}{2}+\lceil\frac{n-1}{d-1}\rceil (d+1)$.
	Moreover, $\beta^d(n) \geq n-1$, when $d$ is odd.
\end{theorem}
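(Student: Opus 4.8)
The plan is to prove Theorem~\ref{thm:main} in three independent pieces: the lower bound $\beta^d(n) \ge \frac{2n(n-1)}{d^2}$, the odd-$d$ lower bound $\beta^d(n) \ge n-1$, and the constructive upper bound. Below I sketch how I would approach each.

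\textbf{Lower bound $\frac{2n(n-1)}{d^2}$.} The idea is a counting (double-counting) argument over all non-trivial points of $\{0,1\}^n$, or more conveniently over a well-chosen test family of points. Each vector $V \in \mathcal{V}$ has exactly $d$ nonzero entries, say $p$ of them equal to $+1$ and $q=d-p$ of them equal to $-1$. A point $A$ (viewed as a subset of $[n]$) is non-trivially bisected by $V$ iff $A$ contains equally many $+1$-positions and $-1$-positions of $V$, and that common number is nonzero. I would count pairs $(V,A)$ where $V$ non-trivially bisects $A$ by fixing $V$ and asking how many $A$ in the test family it can handle; then each $A$ in the test family needs at least one such $V$, giving $|\mathcal{V}| \ge (\text{number of test points}) / (\text{max points handled by one } V)$. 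The cleanest test family is probably the $\binom{n}{2}$ two-element subsets of $[n]$ (as in the Example): a vector $V$ with $p$ plus-entries and $q$ minus-entries non-trivially bisects the pair $\{a,b\}$ iff $a$ is a plus-position and $b$ a minus-position of $V$ (or vice versa), so $V$ handles exactly $pq \le \lfloor d/2 \rfloor^2 \le d^2/4$ pairs. That yields $\beta^d(n) \ge \binom{n}{2}/(d^2/4) = \frac{2n(n-1)}{d^2}$. I expect this is essentially the intended argument; the only care needed is the bound $pq \le d^2/4$ and noting that the pair-hypergraph is a legitimate (non-trivial) instance so $\beta^d(\mathcal{H}) \le \beta^d(n)$.

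\textbf{Odd-$d$ lower bound $n-1$.} Here the test family should be the $n-1$ "two-element" constraints... but actually with $d$ odd a sharper obstruction is available: consider the $n$ singletons-complement-type sets, or more likely consider, for each $i\in[n]$, some point that can only be bisected by a vector whose support includes $i$ in a particular sign-role. A natural approach: look at the $n$ points of Hamming weight $1$... but those are trivial. Instead I would look at weight-2 points again, but now use that when $d$ is odd a single $V$ with split $(p,q)$, $p+q=d$ odd, still handles $pq$ pairs with $pq \le (d-1)(d+1)/4 = (d^2-1)/4$; combined with a parity/linear-algebra refinement one pushes $|\mathcal{V}|$ up to $n-1$. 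Alternatively — and I suspect this is the real argument — assign to each bicoloring $X^{S}$ the vector $v_X \in \{-1,0,1\}^n$ and observe that the $\{0,1\}$-vector $\mathbf{1}_A$ of every non-trivial even set $A$ must satisfy $\langle v_X, \mathbf{1}_A\rangle = 0$ for some $X$; if $|\mathcal{V}| \le n-2$ then the orthogonal complement considerations (the $v_X$ span a space of dimension $\le n-2$) force the existence of a non-trivial $A$ escaping all of them, using that when $d$ is odd the all-ones vector and enough structure is unavailable. I would make this precise via a dimension/rank argument over $\mathbb{R}$ (or $\mathbb{F}_2$), which is the standard "you need $\ge n - (\text{something})$ hyperplanes to cover the punctured cube" phenomenon à la Alon--Füredi.

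\textbf{Upper bound (construction).} Partition $[n-1]$ into $\lceil \frac{n-1}{d-1}\rceil$ blocks of size about $d-1$, and designate vertex $n$ (or some fixed vertex) as a "pivot". The construction should produce two types of bicolorings: (i) roughly $\binom{\lceil 2(n-1)/(d-1)\rceil}{2}$ bicolorings obtained by pairing up blocks (or half-blocks) and two-coloring the union so that any hyperedge meeting two paired blocks gets bisected, and (ii) $\lceil\frac{n-1}{d-1}\rceil (d+1)$ "local" bicolorings that each live essentially inside one block (plus the pivot and a few extra vertices to reach size exactly $d$) to handle hyperedges concentrated in a single block. I would verify that every non-trivial $A$ either spreads across at least two blocks — handled by type (i), by choosing the pairing and coloring so the parities of $|A \cap \text{block}|$ can be balanced — or is essentially inside one block, handled by type (ii) via an exhaustive small family. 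Counting the two families gives exactly the claimed $\binom{\lceil 2(n-1)/(d-1)\rceil}{2} + \lceil \frac{n-1}{d-1}\rceil(d+1)$.

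\textbf{Main obstacle.} The delicate part is the construction (upper bound): getting the block sizes, the pivot, and the "round up to size exactly $d$" padding to mesh so that (a) every bicolored set has size \emph{exactly} $d$, and (b) no non-trivial hyperedge slips through — in particular small hyperedges and hyperedges straddling a block boundary are the dangerous cases. The lower bounds, by contrast, I expect to be short double-counting / linear-algebra arguments; the main subtlety there is simply confirming $pq \le \lfloor d^2/4\rfloor$ and, for the odd case, correctly extracting the extra factor from the parity obstruction noted in the introduction (that an odd set has $|A\cap X(+1)| - |A\cap X(-1)|$ odd, hence never zero when $S=[n]$, and more generally a sign/parity constraint per vector).
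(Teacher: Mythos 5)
Your first lower bound matches the paper exactly: Proposition~\ref{prop:low} is proved by counting two-element subsets, with each bicoloring handling at most $pq\le d^2/4$ pairs, and your upper-bound sketch has the same architecture as Lemma~\ref{lemma:up} (pair up blocks of size $(d-1)/2$ for the $\binom{\cdot}{2}$ term; handle hyperedges concentrated in a block via the $n=d+1$ construction of Theorem~\ref{thm:n-1} and Corollary~\ref{prop:intersect}, which contributes the $\lceil\frac{n-1}{d-1}\rceil(d+1)$ term). Those two pieces are fine as plans, though the upper bound is only a plan --- you correctly flag that the padding-to-size-exactly-$d$ and the straddling hyperedges are where all the work is, and the paper spends most of its effort precisely there.

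The genuine gap is the odd-$d$ bound $\beta^d(n)\ge n-1$. Neither of your two suggestions works as stated. The refined pair count gives only $\binom{n}{2}/\frac{d^2-1}{4}$, which is sublinear once $d\gg\sqrt{n}$, so it cannot reach $n-1$. The ``dimension/rank argument over $\mathbb{R}$ or $\mathbb{F}_2$'' also fails: the vectors $v_X$ spanning a subspace of dimension at most $n-2$ does not produce a \emph{non-trivial point of the Hamming cube} that is non-orthogonal to all of them --- rank information says nothing about where $\{0,1\}$-points sit relative to a union of hyperplanes. You name Alon--F\"uredi, which is the right neighbourhood, but that result is a polynomial-method (Combinatorial Nullstellensatz / Cayley--Bacharach) statement, not a rank statement, and the paper's Lemma~\ref{lemma:lin} is exactly such an argument. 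The missing idea is: set $M(x)=\prod_{B}\langle v_B,x\rangle$ and $P(x)=\bigl(\sum_i x_i-1\bigr)M(x)$. Because $d$ is odd, $\langle v_B,\mathbf{1}\rangle$ is an odd integer, hence nonzero, so $M$ does not vanish at the all-ones point; the factor $\sum_i x_i-1$ kills the singletons, and every other cube point is induced-bisected by some $B$, so $P$ vanishes on all of $\{0,1\}^n$ except $\mathbf{1}$. The Riehl--Graham form of Cayley--Bacharach (a degree-$k$ polynomial vanishing on all but at least one common zero of the quadrics $x_i(x_i-1)$ must have $k\ge n$) then forces $|\mathcal{B}|+1\ge n$. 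Without this parity observation feeding into a degree bound, the $n-1$ lower bound does not follow from anything in your proposal.
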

This establishes asymptotically tight bounds on $\beta^d(n)$ for all values of $n$, when $d$ is odd.
Moreover, the bound is asymptotically tight when $d \in O(\sqrt{n})$, even if $d$ is even.
However, when $d \in \Omega(n^{0.5+\epsilon})$ and $d$ is even, the above lower bound may not be asymptotically tight,
for any  $\epsilon$, $0< \epsilon \leq 0.5$.

\section{Lower Bounds}

Let $\mathcal{H}$ denote the hypergraph consisting of 
all the non-trivial subsets of $[n]$. 
Let the set $\mathcal{X}=\{X^{S_1},\ldots, X^{S_t}\}$ of bicolorings be any optimal induced-bisecting family of order $d$
for $\mathcal{H}$, where $t \in \mathbb{N}$.
%Let  $(\mathcal{S}_1,\mathcal{F}_1')$ (resp., $(\mathcal{S}_2,\mathcal{F}_2')$) be an optimal induced-bisecting (resp., induced-separating) family for $\mathcal{F}$.

%For any $d$-sized hyperedge $A$ of $[n]$, 
%in order to bisect each of the $2^{d}$ subsets of $A$,
%we need at least $\frac{d}{2}$ bicolorings in any induced-bisecting family of order $d$ (see \cite{Niranj2016}).
%So, $\beta^d(n)= |\mathcal{X}| \geq \frac{d}{2}$.
%Moreover, 
Considering only the two sized subsets of $[n]$,
we note that every two element hyperedge $\{a,b\}$, $a$,$b \in [n]$,
must lie in some $S_i$, $S_i \in \{S_1,\ldots,S_t\}$; 
otherwise, no bicoloring in $\mathcal{X}$ can induced-bisect $\{a,b\}$. 
%Moreover, using Observation \ref{obs:1}, $|\mathcal{S}_1| \geq |\mathcal{S}_2|$.
So, it follows that $\sum_{X^S \in \mathcal{X}} \binom{d}{2} \geq \binom{n}{2}$, i.e.,
$\beta^d(n) \geq \frac{n(n-1)}{d(d-1)}.$
A constant factor improvement in the lower bound 
can be obtained by the following observation: 
the maximum number of two element subsets $\{a,b\}$ that can be induced-bisected by
any $X^S \in \mathcal{X}$, $|S| =d$, is $\frac{d^2}{4}$. 
So, we have the following proposition.
\begin{proposition}\label{prop:low}
	$\beta^d(n) \geq \frac{2n(n-1)}{d^2}.$
\end{proposition}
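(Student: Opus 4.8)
The plan is to sharpen the simple counting argument that precedes the proposition. As already observed, every two-element subset $\{a,b\}\subseteq[n]$ is a non-trivial hyperedge of $\mathcal{H}$, and a bicoloring $X^{S_i}$ can induced-bisect $\{a,b\}$ only if both $a$ and $b$ lie in $S_i$ and receive opposite colors under $X^{S_i}$. So the first step is to bound, for a fixed bicoloring $X^{S}$ with $|S|=d$, the number of two-element subsets it can induced-bisect. If $X^{S}$ assigns $+1$ to $p$ vertices of $S$ and $-1$ to the remaining $d-p$ vertices, then a pair $\{a,b\}\subseteq S$ is induced-bisected precisely when one of $a,b$ is colored $+1$ and the other $-1$; hence the number of such pairs is $p(d-p)$, which is maximized at $p=d/2$ and is therefore at most $d^2/4$.

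The second step is a double-counting (pigeonhole) conclusion. Let $t=|\mathcal{X}|$ for an optimal induced-bisecting family $\mathcal{X}=\{X^{S_1},\ldots,X^{S_t}\}$ of order $d$ for $\mathcal{H}$. Count pairs $(X^{S_i},\{a,b\})$ such that $X^{S_i}$ induced-bisects $\{a,b\}$. By the first step this number is at most $t\cdot d^2/4$. On the other hand, since $\mathcal{X}$ is an induced-bisecting family of order $d$ for $\mathcal{H}$ and each of the $\binom{n}{2}$ two-element subsets of $[n]$ is a non-trivial hyperedge, every such pair is induced-bisected by at least one member of $\mathcal{X}$, so the count is at least $\binom{n}{2}$. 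Combining the two estimates gives $t\cdot d^2/4 \ge \binom{n}{2} = \frac{n(n-1)}{2}$, i.e. $t \ge \frac{2n(n-1)}{d^2}$, which is exactly the claimed bound since $\beta^d(n)=t$.

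I do not expect any real obstacle here: the argument is an incidence count, and the only mild points to be careful about are (i) that $p(d-p)\le d^2/4$ holds for all integers $p$ with $0\le p\le d$ (one could even replace $d^2/4$ by $\lfloor d^2/4\rfloor$ when $d$ is odd, but $d^2/4$ is all that is needed), and (ii) that the two-element subsets being counted are genuinely non-trivial hyperedges of $\mathcal{H}$, so the defining property of an induced-bisecting family applies to each of them.
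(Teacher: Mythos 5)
Your argument is correct and is essentially identical to the paper's: the paper also bounds the number of two-element hyperedges induced-bisected by a single bicoloring by $d^2/4$ (a $+1/-1$ pair count) and divides $\binom{n}{2}$ by this quantity. Your write-up just makes the double-counting and the optimization of $p(d-p)$ explicit.
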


Observe that when $d$ is large, say $d \in \Omega(n^{0.5+\epsilon})$, where $0 < \epsilon \leq 0.5$,
Proposition \ref{prop:low} only yields a sublinear lower bound.
When $d$ is odd, we can prove a general lower bound of $n-1$ on $\beta^d(n)$ using the following version of Cayley-Bacharach  theorem by Riehl and Graham \cite{riehl2003} on the maximum number of common zeros between $n$ quadratics and
any polynomial $P$ of smaller degree.

\begin{theorem}\cite{riehl2003}
	\label{thm:cayley}
	Given the $n$ quadratics in $n$ variables $x_1(x_1-1),\ldots,x_n(x_n-1)$ with $2^n$
	common zeros, the maximum number of those common zeros a polynomial $P$ of
	degree $k$ can go through without going through them all is $2^n-2^{n-k}$.
\end{theorem}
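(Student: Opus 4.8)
The plan is to recognize this statement as the Alon--Füredi bound (cf.\ \cite{Alon1993}) specialized to the Boolean hypercube and to prove it by a direct induction. First I would observe that the common zero set of the quadratics $x_i(x_i-1)$ is exactly the Hamming cube $\{0,1\}^n$, of size $2^n$, since $x_i(x_i-1)=0$ iff $x_i\in\{0,1\}$. Phrased on this set, the claim is equivalent to the dual assertion that any polynomial $P$ of degree at most $k$ that does not vanish on all of $\{0,1\}^n$ must be nonzero on at least $2^{n-k}$ points; subtracting from $2^n$ then yields the stated maximum $2^n-2^{n-k}$ for the number of common zeros through which $P$ can pass.

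Before the induction I would reduce to the multilinear case: replacing each occurrence of $x_i^2$ by $x_i$ (i.e.\ reducing $P$ modulo the ideal $\langle x_1^2-x_1,\dots,x_n^2-x_n\rangle$) leaves the values of $P$ on $\{0,1\}^n$ unchanged and cannot increase its degree, since a monomial collapses to the squarefree monomial on its support. Hence I may assume $P$ is multilinear of degree at most $k$ and not identically zero on the cube.

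The core step is the claim, proved by induction on $n$, that a nonzero multilinear $P$ in $n$ variables of degree at most $k$ is nonzero on at least $2^{n-k}$ points of $\{0,1\}^n$. Writing $P=x_nQ+R$ with $Q,R$ free of $x_n$ (so $\deg Q\le k-1$), I split into two cases. If $Q\equiv 0$ then $P=R$ is independent of $x_n$, the inductive hypothesis on the $n-1$ remaining variables gives at least $2^{(n-1)-k}$ nonzero points, and each lifts to two points of the cube, for $2^{n-k}$ in all. If $Q\not\equiv 0$, the inductive hypothesis applied to $Q$ yields at least $2^{(n-1)-(k-1)}=2^{n-k}$ points $y$ with $Q(y)\neq 0$; for each such $y$ the two values $P(y,0)=R(y)$ and $P(y,1)=Q(y)+R(y)$ differ, so at least one is nonzero, again producing $2^{n-k}$ nonzero points. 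The base case $n=0$ (a nonzero constant, nonzero on the single point) is immediate, and the boundary degrees $k=0$ and $k=n$ are absorbed by noting that a multilinear polynomial in $n-1$ variables has degree at most $n-1$, so the bound only improves when the nominal degree exceeds the number of remaining variables.

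Finally, for tightness I would exhibit the monomial $x_1x_2\cdots x_k$, which has degree $k$, is nonzero exactly at the $2^{n-k}$ points with $x_1=\cdots=x_k=1$, and is not identically zero; it therefore passes through precisely $2^n-2^{n-k}$ common zeros, showing the bound is attained. I expect the main obstacle to be organizing the induction so that both subcases close to the same quantity $2^{n-k}$; the crux is the line-decomposition observation that $P(y,0)$ and $P(y,1)$ cannot both vanish when $Q(y)\neq 0$, which is what forces a nonzero point on each such fiber.
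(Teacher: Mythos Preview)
Your proof is correct and self-contained. Note, however, that the paper does not prove this theorem at all: it is quoted verbatim from Riehl and Graham \cite{riehl2003} as a black box (a Cayley--Bacharach-type result) and then applied in the proof of Lemma~\ref{lemma:lin}. So there is no ``paper's proof'' to compare against.

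That said, your argument is the standard and expected one: the multilinear reduction plus the $P=x_nQ+R$ induction is exactly the Alon--F\"uredi/DeMillo--Lipton--Schwartz--Zippel approach specialized to $\{0,1\}^n$, and the tightness witness $x_1\cdots x_k$ is the canonical one. The only place to be slightly more careful is the boundary case in your first subcase ($Q\equiv 0$) when $k>n-1$: there $2^{(n-1)-k}<1$, so the inductive hypothesis as literally stated is vacuous; but since a nonzero multilinear polynomial on $\{0,1\}^{n-1}$ is nonzero at some point, you still get at least $2\geq 2^{n-k}$ lifts, and the induction closes. You gesture at this, and it is fine, but it is worth stating explicitly that the working bound is $\max(1,2^{n-k})$.
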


\begin{lemma}\label{lemma:lin}
	$\beta^d(n) \geq n-1$, when $d$ is odd.
\end{lemma}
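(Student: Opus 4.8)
The plan is to encode an induced-bisecting family of order $d$ (with $d$ odd) as a low-degree polynomial vanishing on a large subset of the Hamming cube, and then invoke Theorem~\ref{thm:cayley} to force the number of bicolorings to be at least $n-1$. Concretely, suppose $\mathcal{X}=\{X^{S_1},\dots,X^{S_t}\}$ is an induced-bisecting family of order $d$ for the hypergraph $\mathcal{H}$ of all non-trivial subsets of $[n]$. To each bicoloring $X^{S_i}$ associate the linear form $\ell_i(x)=\sum_{j\in S_i}\varepsilon_j^{(i)}x_j$, where $\varepsilon_j^{(i)}=+1$ if $j\in X^{S_i}(+1)$ and $-1$ if $j\in X^{S_i}(-1)$. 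The key observation is that a point $A\in\{0,1\}^n$, viewed as the indicator vector of a set, is induced-bisected by $X^{S_i}$ precisely when $\ell_i(A)=0$ \emph{and} $A$ meets $S_i$ (i.e., $|A\cap S_i|\neq 0$); since $d$ is odd and $S_i$ has $d$ elements, the all-ones point cannot satisfy $\ell_i=0$, but every non-trivial $A$ that is induced-bisected does satisfy $\ell_i(A)=0$. Hence the product polynomial $P(x)=\prod_{i=1}^{t}\ell_i(x)$, of degree exactly $t$, vanishes at every non-trivial point of $\{0,1\}^n$ — that is, at every point of the cube except the $n+1$ trivial ones ($\mathbf{0}$, the $n$ unit vectors; note the all-ones vector is trivial here since $d$ is odd).

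Next I would rule out the degenerate possibility and set up the counting. The polynomial $P$ does not vanish identically on the cube: it misses the trivial points. In fact I claim $P$ does not vanish at the zero vector $\mathbf 0$ either — each $\ell_i$ has constant term zero, so $\ell_i(\mathbf 0)=0$, which would make $P(\mathbf 0)=0$ and break the argument if read naively. To fix this, instead of the raw product I work with a slightly modified set of points: $P$ vanishes on all $2^n-n-1$ non-trivial points, and I want to compare this against Theorem~\ref{thm:cayley}, which counts common zeros of the $n$ quadratics $x_j(x_j-1)$ that a degree-$k$ polynomial can pass through \emph{without passing through all of them}. Since $P$ fails to vanish at (at least) the $n$ unit vectors $e_1,\dots,e_n$ — indeed $\ell_i(e_j)=\varepsilon_j^{(i)}\neq 0$ when $j\in S_i$ and we need to argue some coordinate is always hit, which it is because every two-element edge lies in some $S_i$ so every coordinate $j$ lies in some $S_i$ — the polynomial $P$ does not go through all $2^n$ common zeros. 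Therefore the number of common zeros it does pass through, namely $2^n-n-1$ (if it also misses $\mathbf 0$) or $2^n-n$ (if it hits $\mathbf 0$), is at most $2^n-2^{n-t}$, where $t=\deg P$.

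Solving $2^n - (\text{number of trivial points missed}) \le 2^n - 2^{n-t}$ gives $2^{n-t}\le n+1$ (taking the conservative count that $P$ misses at least the $n$ unit vectors and possibly more), hence $n-t\le \log_2(n+1)$. This is weaker than $t\ge n-1$, so a more careful bookkeeping is needed: the point is that $P$ misses \emph{all} $n+1$ trivial points when $d$ is odd, so it passes through exactly $2^n-(n+1)$ common zeros, and I should instead arrange the argument to show $2^{n-t}\ge n+1$ is \emph{not} the right comparison — rather, the correct reading of Riehl–Graham forces $2^n-(n+1)\le 2^n-2^{n-t}$, i.e. $2^{n-t}\le n+1$. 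To reach the clean bound $t\ge n-1$ one uses instead the \emph{dual} statement or iterates: apply the theorem after quotienting out one coordinate, or observe that the linear factors structure of $P$ lets us strip variables one at a time. The cleanest route is: restrict attention to the $n-1$ edges $\{1,n\},\{2,n\},\dots,\{n-1,n\}$ and the corresponding argument coordinate by coordinate, or directly note that a product of $t$ linear forms each in $\le d$ variables that vanishes on all non-trivial cube points must have $t\ge n-1$ by a rank/dimension count on the space of linear forms needed to separate the $n-1$ "directions" away from the unit vectors.

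The main obstacle, and the step I would spend the most care on, is precisely reconciling the exponent count from Theorem~\ref{thm:cayley} with the target bound $n-1$: the naive product construction gives $\deg P = t$ but the Cayley–Bacharach inequality as stated yields only $t \ge n - \log_2(n+1)$, so the real work is either (a) finding the right low-degree polynomial — perhaps not the full product but a product of a carefully chosen \emph{sub}family, or a polynomial combination exploiting that odd $d$ excludes the all-ones point — for which the inequality becomes tight at $n-1$, or (b) replacing the global count by $n-1$ separate applications of the theorem, one per coordinate direction, each contributing one to a lower bound on $t$. I expect (b), via an inductive stripping of coordinates using the multilinear reduction on the cube, to be the approach that actually yields $\beta^d(n)\ge n-1$.
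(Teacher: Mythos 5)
You have the right framework --- encode each bicoloring as a linear form $\ell_i$, take the product, and feed the resulting low-degree polynomial into the Riehl--Graham version of Cayley--Bacharach --- and you correctly diagnose that the naive product only yields $t \ge n - \log_2(n+1)$, because the product may fail to vanish at the $n$ singleton points as well as at the all-ones point. But you stop exactly where the real proof begins: the missing idea is to multiply the product $M(x)=\prod_{i=1}^t \ell_i(x)$ by the single additional linear factor $N(x)=\sum_{j=1}^n x_j-1$. This factor vanishes at every singleton indicator vector, while $M$ already vanishes at $\mathbf 0$ (every $\ell_i(\mathbf 0)=0$) and at every non-trivial point; and since $d$ is odd, $\ell_i(\mathbf 1)=\sum_{j\in S_i}\varepsilon_j^{(i)}$ is an odd integer, hence nonzero, so $P=MN$ is nonzero at $\mathbf 1$. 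Thus $P$, of degree $t+1$, vanishes at exactly $2^n-1$ of the $2^n$ cube points without vanishing at all of them, and Theorem~\ref{thm:cayley} gives $2^n-1\le 2^n-2^{n-(t+1)}$, i.e.\ $2^{n-(t+1)}\le 1$, i.e.\ $t\ge n-1$. That one extra degree-one factor is the entire content of the argument, and it is precisely the ``polynomial combination'' you gesture at in your option (a) but do not produce.

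Your fallback proposals do not close the gap. The claim that $P$ fails to vanish at all $n$ unit vectors is wrong as argued: $\ell_i(e_j)\neq 0$ only for those $i$ with $j\in S_i$, whereas $P(e_j)\neq 0$ would require $j\in S_i$ for \emph{every} $i$, which generically fails; so you cannot even control the sign of the error in your count without the factor $N$. The ``coordinate stripping'' / ``rank count on linear forms'' route in option (b) is not developed to the point where one can check it, and there is no evident way to extract one unit of degree per coordinate from the hypotheses. As written, the proposal establishes only the weaker bound $t\ge n-\log_2(n+1)$ and leaves the passage to $n-1$ as an unproved hope.
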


\begin{proof}
	
Let $\mathcal{B}$ be a minimum-cardinality induced-bisecting family for all the non-trivial subsets
 $A \subseteq [n]$.
 Let $R_B$ denote the $n$-dimensional vector representing the bicoloring $B \in \mathcal{B}$, i.e. $R_B \in \{-1,0,1\}^n$ and $R_B$ contains exactly $d$ nonzero entries.
 Consider the polynomials $M(X)$, $N(X)$, and $P(X)$, $X \in \{0,1\}^n$.

\begin{align}
M(X=(x_1,\ldots,x_n))= \prod_{B \in \mathcal{B}}  <R_B,X>.\\
N(X=(x_1,\ldots,x_n))= \sum_{i=1}^n x_i -1. \\
P(X)= M(X)N(X).
\end{align}
Let $X_A$ denote the 0-1 $n$-dimensional incidence vector corresponding to $A \subseteq [n]$.
Note that $M(X_A)$ vanishes for each $A \subseteq [n]$  except  
(i) the all 1's vector, $(1,\ldots,1)$, since $d$ is odd, and
(ii) possibly the singleton sets.
Since $N(X_A)$ vanishes for all singleton sets, 
$P(X_A)$ vanishes on all subsets  $A \subseteq [n]$ except for the set $[n]$ (corresponding to the the all 1's vector).
Since the degree of $P$ is $|\mathcal{B}|+1$ and $P$ in non-zero only at $X_A=(1,\ldots,1)$, 
using Theorem \ref{thm:cayley}, we have $|\mathcal{B}| \geq n-1$.

\qed
\end{proof}

However, when $d$ is even, the above lower bounding technique does not work since
the polynomial $M$ may vanish at every point of the Hamming cube $\{0,1\}^n$. In this case, we can obtain a lower bound of $\Omega(\sqrt{d})$ by considering the maximum number of hyperedges that can be induced-bisected by a single bicoloring.

\section{Induced-bisecting families when $n$ is $d+1$}
\label{sec:n}
%Throughout the section, we assume that $n$ and $d$ are even.
%In Section 4, in order to induced-bisect the hyperedges of any hypergraph $\mathcal{H}$, we consider the even hyperedges and
%odd hyperedges separately.

In what follows, we consider the hypergraph $\mathcal{H}$ consisting of all the 
non-trivial hyperedges of $[n]$, where $n = d+1$ and demonstrate a construction 
of an induced-bisecting family of order $d$ of cardinality $d+1$.
%This establishes an upper bound on $\beta(n,n-1)$ and we have the following theorem.

%\subsection*{Bound for $\beta^{d}(n)$ }
\begin{theorem}\label{thm:n-1}
	Let $d$ be an integer greater than 1. Then, 
	$d \leq \beta^d(d+1) \leq d+1$. Moreover,
	$\beta^{d}(d+1) = d+1$, when $d$ is even.
\end{theorem}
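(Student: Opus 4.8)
\noindent The statement bundles three inequalities, which I would prove separately.

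\medskip
\noindent\emph{Lower bound $\beta^d(d+1)\ge d$ for odd $d$.} This is immediate from Lemma~\ref{lemma:lin} applied with $n=d+1$, since $n-1=d$.

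\medskip
\noindent\emph{Lower bound $\beta^d(d+1)\ge d+1$ for even $d$.} The idea is that the $d+1$ hyperedges $S_j:=[d+1]\setminus\{j\}$, $j\in[d+1]$, leave no choice. Every admissible support has size $d$, hence is one of $S_1,\dots,S_{d+1}$; and each $S_j$ is a non-trivial hyperedge (it has $d\ge 2$ vertices and is not $[d+1]$), so it must be induced-bisected by some member of the family. But for $i\ne j$ we have $|S_i\cap S_j|=d-1$, which is \emph{odd} since $d$ is even, so a bicoloring supported on $S_i$ cannot split $S_j$ evenly; thus $S_j$ can be induced-bisected only by a bicoloring supported on $S_j$ itself. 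Hence the family contains one bicoloring on each of the $d+1$ distinct supports $S_1,\dots,S_{d+1}$, giving $\beta^d(d+1)\ge d+1$; together with the upper bound below this forces $\beta^d(d+1)=d+1$.

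\medskip
\noindent\emph{Upper bound $\beta^d(d+1)\le d+1$ for all $d>1$.} I would give an explicit family. Identify $[d+1]$ with $\mathbb{Z}_{d+1}$, set $m=\lceil d/2\rceil$, and for $j\in\mathbb{Z}_{d+1}$ let $X^{S_j}$ colour the arc $\{j+1,\dots,j+m\}$ of $S_j=\mathbb{Z}_{d+1}\setminus\{j\}$ with $+1$ and the complementary arc $\{j+m+1,\dots,j+d\}$ with $-1$ (indices mod $d+1$). This colouring puts $m$ vertices in one class and $d-m$ in the other, so it is balanced --- hence induced-bisects $S_j$ --- exactly when $d$ is even, consistent with the previous paragraph. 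To finish, fix a non-trivial $A\subseteq[d+1]$ with $|A|=k$ and set $g(j)=|A\cap\{j+1,\dots,j+m\}|$. Since the $+1$-arc of $X^{S_j}$ avoids $j$, the hyperedge $A$ is induced-bisected by $X^{S_j}$ iff either ($k$ even, $j\notin A$, $g(j)=k/2$) or ($k$ odd, $j\in A$, $g(j)=(k-1)/2$). Now $g(j)-g(j-1)=\mathbf{1}[j+m\in A]-\mathbf{1}[j\in A]\in\{-1,0,1\}$, so $g$ changes by at most $1$ around the cycle $\mathbb{Z}_{d+1}$, while $\sum_j g(j)=km$ makes its mean equal to $\tfrac{kd}{2(d+1)}$. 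The case $k=d+1$ (possible only for even $d$) has $A=[d+1]$, $g\equiv m=(k-1)/2$, and is handled directly. For $2\le k\le d$ an elementary computation shows the mean lies strictly between $t-\tfrac12$ and $t$ when $k$ is even, resp.\ strictly between $t$ and $t+\tfrac12$ when $k$ is odd, where $t$ is the target value; in particular the mean is a non-integer, so $g$ dips strictly below $t$ and rises to $\ge t$ (even $k$), resp.\ dips to $\le t$ and rises strictly above $t$ (odd $k$). Taking the boundary index $j$ of the level set $\{g\le t-1\}$ (even $k$) or $\{g\ge t+1\}$ (odd $k$), the at-most-$1$ variation of $g$ forces a consecutive pair with $g(j+1)=t$; substituting into $g(j+1)-g(j)=\mathbf{1}[j+1+m\in A]-\mathbf{1}[j+1\in A]$ forces $j+1\notin A$ in the even case and $j+1\in A$ in the odd case. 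Thus $j+1$ is an index of the correct type, so $A$ is induced-bisected.

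\medskip
\noindent\emph{Main obstacle.} The substantive work is the verification of the construction in the last paragraph: choosing the rotational colouring so that the auxiliary function $g$ behaves well, pinning down the mean estimate, and carrying out the level-set / discrete-intermediate-value argument that locates a target index of the correct parity relative to $A$ (outside $A$ for even hyperedges, inside $A$ for odd ones). The extreme value $k=d+1$ and the even/odd bookkeeping for the target $t$ are where one must be careful; the two lower bounds are short by comparison.
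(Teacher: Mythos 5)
Your overall strategy coincides with the paper's: the upper bound comes from rotating a fixed near-balanced colouring around a circular arrangement of the $d+1$ vertices and running a discrete intermediate-value argument; the $\ge d+1$ lower bound for even $d$ comes from noting that a bicoloring supported on $S_i$ meets $S_j$ ($i\ne j$) in $d-1$ vertices, an odd number, so only a bicoloring supported on $S_j$ can bisect $S_j$; and the $\ge d$ bound for odd $d$ is Lemma~\ref{lemma:lin}. Both lower bounds are correct. The paper tracks the signed discrepancy $c_i=|A\cap X_{i+1}(+1)|-|A\cap X_{i+1}(-1)|$ and reduces even hyperedges to odd ones by complementation, whereas you track the unsigned count $g(j)$ and treat both parities of $|A|$ directly; this is only a difference in bookkeeping.

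There is, however, one concrete slip in your upper-bound verification. With $m=\lceil d/2\rceil$ the mean of $g$ is $km/(d+1)$, which equals $\frac{kd}{2(d+1)}$ only when $d$ is even; when $d$ is odd it equals $k/2$. Consequently, for odd $d$ and even $k$ the mean is exactly the integer target $t=k/2$, so your claims that the mean is a non-integer lying strictly below $t$ both fail, and $g$ need not dip below $t$: it can be identically equal to $t$. The repair is short: if $g\equiv t$ then, since $k\le d$ forces $A\ne[d+1]$, any $j\notin A$ already satisfies $g(j)=t\ne 0$ and bisects $A$; if $g$ is non-constant with mean $t$ it must take values both $\le t-1$ and $\ge t$, and your level-set boundary argument applies verbatim. (For odd $d$ and odd $k$ the mean is exactly $t+\tfrac{1}{2}$ rather than strictly inside $(t,t+\tfrac{1}{2})$, but all the argument needs is that it lies in the open interval $(t,t+1)$, so that case is unaffected.) With this patch the proof is complete and matches the paper's bounds.
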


\begin{proof}
	
	We consider the cases when $d$ is even and $d$ is odd separately.
	We start our analysis with the case	when $d$ is even.
	Let $v_1,\ldots,v_{d+1}$ denote the $d+1$ vertices.
	Consider a circular clockwise arrangement of $d+1$ slots, namely $P_1,\ldots,P_{d+1}$ in that order.
	The slots $P_1$ to $P_{\frac{d}{2}}$ are colored with +1, slots $P_{\frac{d}{2}+2}$ to $P_{d+1}$ are colored with -1, and only slot $P_{\frac{d}{2}+1}$ remains uncolored.
	Each slot can contain exactly one vertex and each vertex takes the color of the slot it resides in.
	As for the initial configuration, let $v_i \in P_i$, for $1 \leq i \leq d+1$. This configuration gives the coloring $X_1$, where
	(i) $X_1(+1)=\{v_1,\ldots, v_{\frac{d}{2}}\}$, (ii) $X_1(-1)=\{v_{\frac{d}{2}+2},\ldots, v_{d+1}\}$, and, (iii) the vertex $v_{\frac{d}{2}+1}$ remains uncolored.
	We obtain the second coloring $X_2$ from $X_1$ by one clockwise rotation of the 
	vertices in the circular arrangement.
	Therefore, we have, $X_2(+1)=\{v_{d+1},v_1,\ldots, v_{\frac{d}{2}-1}\}$, $X_2(-1)=\{v_{\frac{d}{2}+1},\ldots, v_{d}\}$; the vertex $v_{\frac{d}{2}}$ remains uncolored.
	%So, in $X_2$, $P_i$ contains the element $v_{i-1}$, $2 \leq i \leq n$, $P_1$ contains $v_n$.
	See Figure \ref{fig:upp1} for an illustration.
	Similarly, repeating the process $d$ times, we obtain the set $\mathcal{X}=\{X_1,\ldots,X_{d+1}\}$  of bicolorings.
	We have the following observations.
	
	\begin{figure}
		\centering
		\includegraphics[scale=0.6]{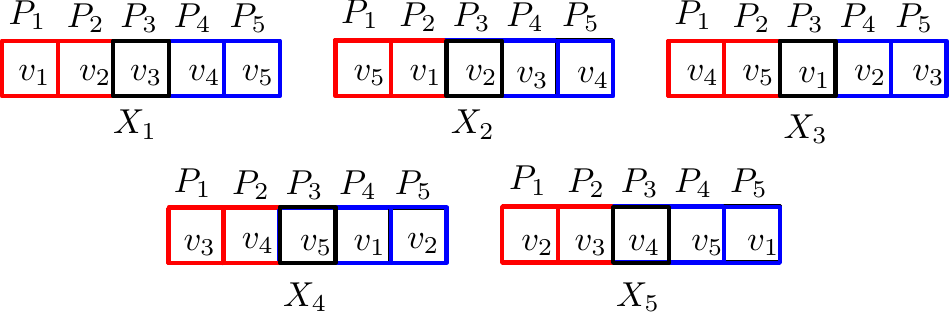}
		\caption{Vertices in (i) $P_1$ and $P_2$ are colored with +1,
			(ii) $P_4$ and $P_5$ are colored with -1; the vertex in $P_3$ remains uncolored.
			$\mathcal{X}=\{X_1,\ldots,X_5\}$ is an induced bisecting family when $n=d+1=5$.}
		\label{fig:upp1}
	\end{figure}

	\begin{observation}\label{obs:lt1}
		If $\mathcal{X}$ induced-bisects every non-trivial odd subset
		of $[d+1]$, then $\mathcal{X}$ induced-bisects every non-trivial even subset	of $[d+1]$ as well.
	\end{observation}
	
	To prove the observation, consider an	
	even hyperedge $A_e \subset [d+1]$,
	and let $X \in \mathcal{X}$ be the bicoloring that induced-bisects the odd hyperedge $\bar{A_e}=[d+1] \setminus A_e$.
	Note that one vertex in $\bar{A_e}$ remains uncolored under $X$.
	Otherwise, $\bar{A_e}$ cannot get induced-bisected under $X$.
	Since $|X(+1)|=\frac{d}{2}$ and $|\bar{A_e}\cap X(+1)|=\frac{|\bar{A_e}|-1}{2}$,
	it follows that $|A_e \cap X(+1)|=|X(+1)\setminus (\bar{A_e}\cap X(+1))|=\frac{d}{2}-\frac{|\bar{A_e}|-1}{2}$.
	Similarly, $|A_e \cap X(-1)|=\frac{d}{2} -\frac{|\bar{A_e}|-1}{2}$.
	So, $A_e$ is induced-bisected under $X$.
	This completes the proof of Observation \ref{obs:lt1}.
	
	Therefore, it suffices to prove that $\mathcal{X}$ induced-bisects
	every non-trivial odd subset of $[d+1]$.
	For the sake of contradiction, assume that $A$ is an odd hyperedge not induced-bisected by $\mathcal{X}$.
	Let $c_i=|A \cap X_{i+1}(+1)|-|A \cap X_{i+1}(-1)|$, for $0 \leq i \leq d$. 
	All additions/subtractions in the subscript of $c$ are modulo $d+1$.
	Our assumption implies that $c_i \neq 0$ for all $0 \leq i \leq d$.
	
	\begin{observation}\label{obs:lt2}
		$|c_i - c_{i+1}| \leq 2$,  for $0 \leq i \leq d$. 
		Furthermore, if $c_i > c_{i+1}$ and $c_i$ is odd, then $c_i - c_{i+1}=1$.
	\end{observation}

	The first part of Observation \ref{obs:lt2}
	follows from the construction and we omit the details for brevity.
	%The presence of the uncolored slot $P_{\frac{d}{2}+1}$
	Note that when $c_i$ is odd, the element in $P_{\frac{d}{2}+1}$ cannot belong to the odd hyperedge $A$.
	This takes care of the second part of Observation \ref{obs:lt2}.
	
	Observe that 
	%there is an index $j$, $1 \leq j \leq n$,
	a bicoloring $X_j \in \mathcal{X}$, $1 \leq j \leq d+1$, induced-bisects the odd hyperedge $A$ if and only if $c_j$ is 0.
	% - this is a contradiction to the assumption that $A$ that is not induced-bisected by $\mathcal{X}$.
	%	For the sake of contradiction, assume that $c_j \neq 0$ for all $j$, $1 \leq j \leq n$.
	We know that bicoloring $X_2$ ($X_{i+1}$) is obtained from $X_1$ ($X_i$, respectively) by one clockwise rotation of vertices
	in the circular arrangement. Thus, during the construction of bicolorings $X_1$ through $X_{d+1}$, we perform a full rotation of
	the vertices with respect to their starting arrangement in $X_1$.
	So, it follows that 
	%in the sequence $c_0,\ldots,c_{d}$, 
	there exist $i$ and $j$ such that $c_i$ is positive and $c_{i+j}$ is negative.
	Combined with the second part of Observation \ref{obs:lt2},
	this implies the existence of 
	%	Since both $c_i$
	%	If $c_j\neq 0$ for each $j$, $1 \leq j \leq n$,
	%	using Observation \ref{obs:lt2}, it follows that
	%there exists 
	an index $p$ such that $c_p=0$.
	%	Observe that such a configuration is impossible under the construction: since $c_i$ ($c_{i+1}$) is 1 (resp., -1),  $|A \cap P_n|=0$ under both $X_i$ and $X_{i+1}$.
	%	So, it follows that $c_j=0$ for at least one $j$, $1 \leq j \leq n$.
	This is a contradiction to the assumption that $A$ is not induced-bisected by $\mathcal{X}$.
	Therefore, every odd subset of $[d+1]$ is induced-bisected 
	by $\mathcal{X}$, and using Observation \ref{obs:lt1},
	the upper bound on $\beta^{d}(d+1)$ follows.
	
	To see that the upper bound is tight, observe that exactly one 
	$d$-sized hyperedge can get induced bisected under a single bicoloring - the hyperedge missing the uncolored vertex.
	This completes the proof of Theorem \ref{thm:n-1} for even values of $d$.
	%	
	%	We claim that $\mathcal{X}$
	%	induced-bisecting family of order $n-1$ for $\mathcal{H}$.
	%	To prove the claim, consider any odd hyperedge $A_o$ that is not induced-bisected by $X_1$. 
	%	Without loss of generality, assume that $|A_o \cap X_1(+1)|> |A_o \cap X_1(-1)|$.
	%	Observe that $|A_o \cap X_{\frac{n}{2}}(-1)| \geq  |A_o \cap X_1(+1)|-1$.
	%	So, there exists at least one coloring, $X \in \mathcal{X}$
	%	such that $|A_o \cap X(-1)|=|A_o \cap X(+1)|$ and 
	%	one vertex of $A_o$ is in position $P_{\frac{n}{2}}$ and is uncolored.
	%	That $X$ induced-bisects $A_o$.
	
	Recall that along with the empty set and the singleton sets, the set $[d+1]$ becomes trivial when $d$ is odd.
	When $d$ is odd, 
	the slots $P_1$ to $P_{\frac{d+1}{2}-1}$ are colored with +1, slots $P_{\frac{d+1}{2}+1}$ to $P_{d+1}$ are colored with -1, and only slot $P_{\frac{d+1}{2}}$ remains uncolored.
	If we generate the bicolorings $\{X_1,\ldots,X_{d+1}\}$ as in the proof for the even values of $d$,
	by similar arguments, it can be shown that
	$\{X_1,\ldots,X_{d+1}\}$ is indeed an induced-bisecting family for
	the hypergraph consisting of all the non-trivial hyperedges 
	(see Appendix \ref{app:2} for a proof).
	The fact that $\beta^d(d+1) \geq  d$ for odd values of $d$
	follows directly from Lemma \ref{lemma:lin}.
	%Note that the hyperedge $[d+1]$ can never get induced-bisected in this case
	%Recall that along with the empty set and the singleton sets, the set $[n]$ becomes trivial under this restriction.
	\qed
\end{proof}

%We state this proposition as a theorem below .

%\begin{theorem}\label{thm:n-1even}
%	, when $n$ is an even integer and $d=n-1$.
%\end{theorem}
We have the following corollary
which gives an upper bound to the cardinality of an induced-bisecting families for arbitrary values of $n$.

\begin{corollary}
	Let $\mathcal{H}$ be any hypergraph on vertex set $V(\mathcal{H})=\{v_1,\ldots,v_n\}$ and let $d \in [n]$.
	Let $\mathcal{F}$ consist of $(d+1)$-sized subsets of $V(\mathcal{H})$ such that
	for every $B \in E(\mathcal{H})$, there exists an $A \in \mathcal{F}$ with 
	(i) $|B \cap A|\geq 2$, when $d$ is even;
	(ii) $2 \leq |B \cap A| \leq d$, when $d$ is odd.
	Then, we can construct an induced-bisecting family of order $d$ of cardinality
	$|\mathcal{F}|(d+1)$ for $\mathcal{H}$.
	\label{prop:intersect} 
\end{corollary}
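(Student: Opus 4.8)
The plan is to reduce the general statement to the special case $n=d+1$ that Theorem~\ref{thm:n-1} already resolves, by applying that theorem locally to each member of $\mathcal{F}$ and taking the union of the resulting families. First I would fix $A\in\mathcal{F}$; since $|A|=d+1$, treating $A$ as the ground set, Theorem~\ref{thm:n-1} provides an induced-bisecting family $\mathcal{X}_A$ of order $d$ and size $d+1$ that induced-bisects every non-trivial subset of $A$ (non-empty, non-singleton, and, when $d$ is odd, also different from $A$ itself). Each bicoloring in $\mathcal{X}_A$ is supported on a $d$-subset $S\subseteq A\subseteq[n]$, so $\mathcal{X}_A$ is also a legitimate collection of order-$d$ bicolorings of subsets of $[n]$.

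Next I would set $\mathcal{X}=\bigcup_{A\in\mathcal{F}}\mathcal{X}_A$; this is a family of order-$d$ bicolorings with $|\mathcal{X}|\le\sum_{A\in\mathcal{F}}|\mathcal{X}_A|=|\mathcal{F}|(d+1)$. The remaining task is to show that $\mathcal{X}$ induced-bisects every $B\in E(\mathcal{H})$. Given such a $B$, I would invoke the hypothesis to choose $A\in\mathcal{F}$ with $|B\cap A|\ge 2$ (when $d$ is even) or $2\le|B\cap A|\le d$ (when $d$ is odd). In both cases $B\cap A$ is a subset of the $(d+1)$-element set $A$ with at least two elements, so it is neither empty nor a singleton; in the odd case the bound $|B\cap A|\le d=|A|-1$ additionally forces $B\cap A\ne A$. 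Hence $B\cap A$ is a non-trivial subset of $A$, so some $X^S\in\mathcal{X}_A$ satisfies $|(B\cap A)\cap X^S(+1)|=|(B\cap A)\cap X^S(-1)|\ne 0$. Since $X^S$ colors only vertices of $S\subseteq A$, one has $B\cap X^S(\pm1)=B\cap A\cap X^S(\pm1)$, so $X^S$ in fact induced-bisects $B$ itself. This finishes the construction and yields the claimed bound.

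I do not expect a serious obstacle here — the argument is essentially a localization reduction. The one step that requires care is verifying that the stated intersection conditions on $|B\cap A|$ are exactly the conditions that make $B\cap A$ non-trivial \emph{as a subset of the local ground set $A$}; in particular the upper bound $|B\cap A|\le d$ in the odd case must be recognized as precisely the exclusion of the trivial edge $A$ of the $(d+1)$-vertex hypergraph, which is the subtle interface between the global and local notions of ``non-trivial.''
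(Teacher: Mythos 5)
Your proposal is correct and follows essentially the same route as the paper: apply Theorem~\ref{thm:n-1} locally to each $A\in\mathcal{F}$, observe that the stated intersection conditions make $B\cap A$ a non-trivial subset of $A$, and take the union of the local families. Your explicit check that a bicoloring supported on $S\subseteq A$ induced-bisects $B$ iff it induced-bisects $B\cap A$ is the same step the paper leaves implicit.
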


\begin{proof}
	For any subset $A \in \mathcal{F}$, using the procedure used in the proof of Theorem \ref{thm:n-1},  we can obtain an induced-bisecting family $\mathcal{X}_{A}$ for 
	all the non-trivial subsets of $A$, where $|\mathcal{X}_{A}|=d+1$.
	When $d$ is even, $\mathcal{X}_{A}$ induced-bisects all the $2^{d+1}-(d+1)-1$ non-empty and non-singleton subsets of $A$;
	therefore, each $B \in E(\mathcal{H})$ with $|B \cap A|\geq 2$ is induced-bisected by $\mathcal{X}_{A}$.
	When $d$ is odd, $\mathcal{X}_{A}$ induced-bisects all but the empty set, the singleton sets, and $A$;
	so, each $B \in E(\mathcal{H})$ with $2 \leq |B \cap A|\leq d$ is induced-bisected by $\mathcal{X}_{A}$.
	Repeating the process for each $A \in \mathcal{F}$, we get an induced-bisecting family of cardinality
	$|\mathcal{F}|(d+1)$ for $\mathcal{H}$.
	\qed
\end{proof}

Theorem \ref{thm:n-1} provides evidence for the following property (which is described in Corollary \ref{cor:1}) of 
the odd subsets under any circular permutation of odd number of elements which
%This problem is actually out of context and 
may be of independent interest.
For any circular permutation $\sigma$ of $[n]$, $a,b \in [n]$,
let $dist_{\sigma}(a,b)$ denote the clockwise distance between $a$ and $b$
with respect to  $\sigma$, which is one more than the number of elements residing between $a$ and $b$ in the permutation $\sigma$ in the clockwise direction.

\begin{corollary}\label{cor:1}
	Consider any circular permutation $\sigma$ of $[n]$, where $n$ is odd.
	For any odd $k$-sized subset $A \subseteq [n]$, let $(a_0,\ldots,a_{k-1})$ be the ordering of $A$ with respect to $\sigma$.
	Then, there exists an index $i \in \{0,\ldots,k-1\}$ such that $dist_{\sigma}(a_i,a_{i+\lfloor\frac{k}{2}\rfloor}) < \frac{n}{2}$
	and $dist_{\sigma}(a_{i+\lfloor\frac{k}{2}\rfloor+1},a_i) < \frac{n}{2}$, where summation in the subscript of $a$ is modulo $k$.
\end{corollary}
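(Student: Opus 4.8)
The plan is to derive Corollary~\ref{cor:1} from Theorem~\ref{thm:n-1} by translating the conclusion ``$\mathcal{X}$ induced-bisects every odd subset'' into the distance statement. Set $n = d+1$ (so $d$ is even, matching the even case of the construction in the proof of Theorem~\ref{thm:n-1}), and let $\sigma$ be the given circular permutation of $[n]$. Place the vertices in the $d+1$ circular slots $P_1,\ldots,P_{d+1}$ so that the cyclic order of the slots agrees with $\sigma$; then the family $\mathcal{X}=\{X_1,\ldots,X_{d+1}\}$ built in that proof is an induced-bisecting family of order $d$ for all non-trivial subsets, and in particular for every odd subset $A$. Take an odd $k$-sized $A$ with $3 \le k \le d+1$ (the case $k=1$ is excluded since singletons are trivial; note $k$ odd and $k \le d+1 = n$, and if $k=n$ that set is also trivial when $d$ is odd, but here $d$ is even so $k=n$ is allowed and handled uniformly), and let $X_j$ be the coloring that induced-bisects it. In coloring $X_j$ exactly one slot, the ``middle'' slot, is uncolored, the $d/2$ slots immediately clockwise-before it are $+1$ and the $d/2$ slots immediately clockwise-after it are $-1$ (reading the construction); crucially, the uncolored slot must be empty of $A$-elements, as already observed in the proof of Observation~\ref{obs:lt1}/\ref{obs:lt2}.

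Next I would extract the index. Let $(a_0,\ldots,a_{k-1})$ be the elements of $A$ listed in clockwise $\sigma$-order starting just clockwise-after the uncolored slot of $X_j$. Since $X_j$ induced-bisects $A$, the first $k/2 \cdots$ — more precisely, walking clockwise from the uncolored slot, the $+1$ region contains exactly half of $A$ and the $-1$ region the other half; because $k$ is odd and $|A \cap X_j(+1)| = |A \cap X_j(-1)|$, we actually need $k$ even... so instead I index $A$ so that $a_0$ is the first $A$-element clockwise-after the uncolored slot: then $a_0,\ldots,a_{\lfloor k/2 \rfloor -1}$ all lie in the $+1$ half and $a_{\lfloor k/2 \rfloor},\ldots,a_{k-1}$ all lie in the $-1$ half (using that the uncolored slot carries no $A$-element, the $+1$ block spans a clockwise arc of length $d/2$ and the $-1$ block an arc of length $d/2$, together with the uncolored slot they fill the whole circle of length $d+1=n$). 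I then claim $i$ with this $a_i := a_0$ works: the arc from $a_0$ clockwise to $a_{\lfloor k/2 \rfloor - 1}$ sits inside the $+1$ block so $dist_\sigma(a_0, a_{\lfloor k/2 \rfloor}) \le d/2 + 1$; wait — I must be careful with the $\pm 1$ in the statement, so let me instead verify the two inequalities $dist_\sigma(a_i, a_{i+\lfloor k/2 \rfloor}) < n/2$ and $dist_\sigma(a_{i+\lfloor k/2 \rfloor + 1}, a_i) < n/2$ directly: the first counts the clockwise span from the start of the $+1$ block through all $+1$ elements up to the first $-1$ element, which is at most the length of the $+1$ block plus one step into the $-1$ block, i.e. $\le d/2 + 1 = n/2 + 1/2$, and since $dist$ is an integer and $n$ odd this is $\le (n-1)/2 < n/2$; the second is symmetric on the $-1$ side.

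The main obstacle I anticipate is getting the index bookkeeping exactly right — in particular, pinning down which $A$-element is ``$a_i$'' relative to the uncolored slot, and confirming that the clockwise spans of the $+1$ and $-1$ blocks really are $d/2$ each (not $d/2$ and $d/2+1$, say), so that the rounding against ``$< n/2$'' with $n$ odd goes through cleanly. A secondary point is making sure the reduction covers all odd $k$ with $3 \le k \le n$ and that the hypothesis ``$n$ odd'' in the corollary corresponds exactly to ``$d$ even'' in Theorem~\ref{thm:n-1}; the degenerate small cases ($k=1$, or $A = [n]$) either are excluded as trivial or can be checked by hand. Once the block-length normalization is fixed, the inequalities are immediate from the geometry of the three arcs ($+1$ block, uncolored slot, $-1$ block) partitioning the cycle, so I expect no further difficulty beyond careful indexing. \qed
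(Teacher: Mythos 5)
Your overall strategy is the one the paper uses: run the rotation construction of Theorem~\ref{thm:n-1} with $n=d+1$ ($d$ even), take the bicoloring $X_j$ that induced-bisects the odd set $A$, and read off the index $i$ from the position of the uncolored slot. However, your ``crucial'' claim that the uncolored slot carries no element of $A$ is exactly backwards, and this breaks the rest of the argument. By parity, $|A\cap X_j(+1)|+|A\cap X_j(-1)|$ is even whenever these two quantities are equal; since $|A|=k$ is odd and only one slot is uncolored, an odd $A$ can be induced-bisected by $X_j$ \emph{only if} the uncolored slot contains exactly one element of $A$. (The sentence you cite from the proof of Observation~\ref{obs:lt2} says the uncolored element is \emph{not} in $A$ precisely when $c_i$ is \emph{odd}, i.e., when $A$ is \emph{not} bisected; the proof of Observation~\ref{obs:lt1} explicitly notes that one vertex of the bisected odd set $\bar{A_e}$ must be uncolored.) The correct choice of $a_i$ is that uncolored element of $A$ --- this is what the paper does --- after which both arcs ($a_i$ clockwise through the $\lfloor k/2\rfloor$ elements on the $-1$ side, and the $\lfloor k/2\rfloor$ elements on the $+1$ side clockwise back to $a_i$) have length at most $d/2=(n-1)/2<n/2$ with no rounding subtleties.

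Your version cannot be repaired by re-indexing alone: with the uncolored slot disjoint from $A$ you would need $\lfloor k/2\rfloor$ elements on one side and $\lceil k/2\rceil$ on the other, which contradicts $|A\cap X_j(+1)|=|A\cap X_j(-1)|$, so the bisecting coloring you invoke does not exist under your assumption. Moreover, your final estimate is a non sequitur: you bound the first distance by $d/2+1=(n+1)/2$ and then conclude it is at most $(n-1)/2$ ``since it is an integer and $n$ is odd,'' but $(n+1)/2$ is itself an integer exceeding $n/2$ when $n$ is odd. Both defects disappear once $a_i$ is taken to be the (necessarily unique) element of $A$ in the uncolored slot.
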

\begin{proof}
	Consider a circular clockwise arrangement of $n$ slots, namely $P_1,\ldots,P_n$ in that order. Put vertex $\sigma(i)$ in $P_i$.
	Now, following the procedure outlined in the proof of Theorem \ref{thm:n-1}, obtain a bicoloring that bisects $A$.
	Pick the uncolored vertex residing in slot $P_{\lceil \frac{n}{2}\rceil}$ with respect to the bicoloring $X$.
	Observe that this vertex satisfies the desired property.
	\qed
\end{proof}

\section{Upper bounds for $\beta^d(n)$ and proof of Theorem \ref{thm:main}}
\label{sec:randup}
From Proposition \ref{prop:low}, we know that 
$\beta^d(n) \geq \frac{2n(n-1)}{d^2}$.
In this section, we prove an upper bound of $\binom{\lceil\frac{2(n-1)}{d-1} \rceil}{2}+\lceil\frac{n-1}{d-1}\rceil (d+1)$ for  $\beta^d(n)$.

\subsection{A deterministic construction of induced-bisecting families}

\begin{lemma}\label{lemma:up}
$\beta^d(n) \leq \binom{\lceil\frac{2(n-1)}{d-1} \rceil}{2}+\lceil\frac{n-1}{d-1}\rceil (d+1)$.
\end{lemma}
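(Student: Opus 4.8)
The plan is to present an induced-bisecting family for the hypergraph $\mathcal H$ of all non-trivial subsets of $[n]$ as a union $\mathcal X_1\cup\mathcal X_2$, where $\mathcal X_1$ comes from applying Theorem~\ref{thm:n-1} (as packaged in Corollary~\ref{prop:intersect}) to a short list of $(d+1)$-sets, and $\mathcal X_2$ contributes one cheap bicoloring for each pair of a suitable collection of ``small groups'', used only to mop up the hyperedges $\mathcal X_1$ misses. I assume $n\ge d+1$ (so that $(d+1)$-subsets of $[n]$ exist).

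\textbf{Setup and $\mathcal X_1$.} Fix a vertex $\ast=n$, put $g=\lceil\tfrac{d-1}{2}\rceil$, and partition $[n-1]$ into $m:=\lceil\tfrac{n-1}{g}\rceil$ small groups $G_1,\dots,G_m$ in which $G_1,\dots,G_{m-1}$ each have size exactly $g$ and $G_m$ is the nonempty remainder of size $\le g$ -- so at most one small group fails to be ``full''. Glue the small groups in consecutive pairs into $r:=\lceil m/2\rceil$ big groups $H_1,\dots,H_r$, each a union of at most two small groups and hence of size $\le 2g\le d$. Using $g\ge\tfrac{d-1}{2}$ and the elementary identity $\lceil\lceil 2x\rceil/2\rceil=\lceil x\rceil$ one gets $m\le\lceil\tfrac{2(n-1)}{d-1}\rceil$ and $r\le\lceil\tfrac{n-1}{d-1}\rceil$, so it suffices to build an induced-bisecting family of size $\binom m2+r(d+1)$. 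For each $k$ choose a $(d+1)$-set $A_k$ with $H_k\cup\{\ast\}\subseteq A_k\subseteq[n]$ (padding arbitrarily for even $d$, with some care for odd $d$ -- see below), and let $\mathcal X_{A_k}$ be the order-$d$ family of size $d+1$ from Theorem~\ref{thm:n-1}; set $\mathcal X_1=\bigcup_k\mathcal X_{A_k}$, $|\mathcal X_1|=r(d+1)$. As in the proof of Corollary~\ref{prop:intersect}, $\mathcal X_{A_k}$ induced-bisects every $B$ with $|B\cap A_k|\ge 2$ (even $d$), and every $B$ with $2\le|B\cap A_k|\le d$ (odd $d$). Call a non-trivial hyperedge a \emph{leftover} if no member of $\mathcal X_1$ induced-bisects it. Since $\ast\in A_k$ for all $k$, a leftover $B$ must satisfy $\ast\notin B$ and $|B\cap H_k|\le 1$ for every $k$: for even $d$ this is immediate ($\ast\in B$ would force $B\cap A_k=\{\ast\}$ for all $k$, hence $B=\{\ast\}$), and for odd $d$ it is where a careful padding choice is needed (e.g. send the one padding vertex of $A_k$ into $H_{k+1}$ cyclically and the padding of $A_r$ into $H_1$; then a leftover containing $\ast$ forces, via a successor-closed argument on the indices $k$ with $A_k\subseteq B$, that $B=\{\ast\}\cup[n-1]=[n]$, which is trivial for odd $d$). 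So a leftover has at most one vertex in each big group, hence at most one vertex in each small group, and avoids $\ast$.

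\textbf{$\mathcal X_2$ and conclusion.} For each pair $\{G_i,G_j\}$ of small groups let $X_{ij}$ color all of $G_i$ with $+1$, all of $G_j$ with $-1$, and, if needed, fill its ground set up to exactly $d$ vertices arbitrarily -- crucially, taking the ground set to be exactly $G_i\cup G_j$ (even $d$) or $G_i\cup G_j\cup\{\ast\}$ (odd $d$), with no further vertices, whenever $G_i$ and $G_j$ are both full, which is possible because then $|G_i|+|G_j|=2g\in\{d-1,d\}$. Put $\mathcal X_2=\{X_{ij}:i<j\}$, $|\mathcal X_2|=\binom m2$. Now let $B$ be a leftover. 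If $|B|=2$, say $B=\{a,b\}$ with $a\in G_i$, $b\in G_j$ (necessarily $i\ne j$, since $B$ meets distinct big groups), then under $X_{ij}$ the vertices $a,b$ receive colors $+1,-1$ and are the only vertices of $B$ in the ground set of $X_{ij}$, so $|B\cap X_{ij}(+1)|=|B\cap X_{ij}(-1)|=1$ and $B$ is induced-bisected. If $|B|\ge 3$, then $B$ meets at least three big groups, hence at least three distinct small groups, at most one of which is $G_m$; pick two full ones, $G_i,G_j$, that $B$ meets. By construction the ground set of $X_{ij}$ lies inside $G_i\cup G_j\cup\{\ast\}$, so $B$ meets it in exactly its two vertices in $G_i$ and $G_j$, colored $+1$ and $-1$; hence $B$ is induced-bisected. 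Thus $\mathcal X_1\cup\mathcal X_2$ is an induced-bisecting family of order $d$ for $\mathcal H$, and $\beta^d(n)\le\binom m2+r(d+1)\le\binom{\lceil\tfrac{2(n-1)}{d-1}\rceil}{2}+\lceil\tfrac{n-1}{d-1}\rceil(d+1)$.

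\textbf{Main obstacle.} Two things carry the argument. First, matching the two cardinalities to the ceilings in the statement forces the ``$m-1$ full plus one remainder'' partition (rather than a balanced one) together with the ceiling identity above -- routine but where the exact shape of the bound is pinned down. Second, and more delicate, one must be sure the padding inside the $A_k$'s never creates a leftover that $\mathcal X_2$ cannot bisect: this is precisely why every leftover of size $\ge 3$ must be shown to meet two \emph{full} small groups, so that $\mathcal X_2$ only ever bisects such a $B$ through a padding-free ground set $G_i\cup G_j\,(\cup\{\ast\})$, on which its intersection with $B$ is automatically the balanced $2$-set. I expect the bulk of the write-up to be this leftover case analysis, plus the (fiddly, especially for odd $d$) verification of the padding assignment.
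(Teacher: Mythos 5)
Your proof is correct and follows essentially the same strategy as the paper's: partition the ground set into $m\le\lceil\frac{2(n-1)}{d-1}\rceil$ parts of size about $\frac{d}{2}$, use the $\binom{m}{2}$ pairwise part-bicolorings to bisect hyperedges meeting every part at most once, and cover the remaining hyperedges by applying the circular construction of Theorem~\ref{thm:n-1} (via Corollary~\ref{prop:intersect}) to $\lceil\frac{n-1}{d-1}\rceil$ many $(d+1)$-sets formed from consecutive pairs of parts, with a cyclic padding choice for odd $d$ ensuring the only unbisected set is the trivial set $[n]$ (exactly the role of the paper's $\mathcal{D}'$ and Observation~\ref{obs:lemma2}). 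The only cosmetic difference is that you reserve the vertex $n$ and use group size $\lceil\frac{d-1}{2}\rceil$ uniformly for both parities, whereas the paper handles even $d$ directly on $[n]$ with parts of size $\frac{d}{2}$ and reduces odd $d$ to that case.
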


Before proceeding to the proof of the above lemma, we give few definitions that
simplify the proof considerably.
%Let $\mathcal{P}(n,d)$ and $\mathcal{D}(n,d)$ denote two families of subsets of $[n]$.
%Let $\mathcal{B}(n,d)$ denote a set of bicolorings of subsets of $[n]$.
Let $d$ be a positive even integer.
Let $\mathcal{S}(n,d)=\{P_1,\ldots, P_{\lceil\frac{2n}{d}\rceil}\}$ denote a partition of $[n]$, where each 
$P \in \mathcal{S}(n,d) \setminus \{P_{\lceil\frac{2n}{d}\rceil}\}$ is of cardinality exactly $\frac{d}{2}$, and 
$|P_{\lceil\frac{2n}{d}\rceil}| \leq \frac{d}{2} $.
Let $P_{\lceil\frac{2n}{d}\rceil}^1 = P_{\lceil\frac{2n}{d}\rceil} \cup Q_1$, 
$P_{\lceil\frac{2n}{d}\rceil}^2 = P_{\lceil\frac{2n}{d}\rceil} \cup Q_2$,
where $Q_i$ denotes a fixed $(\frac{d}{2} - |P_{\lceil\frac{2n}{d}\rceil}|)$-sized subset of $P_i$.
For an even $d$, we define $\mathcal{P}(n,d)$, $\mathcal{D}(n,d)$ and $\mathcal{B}(n,d)$ as follows.

\subsubsection*{Definition of $\mathcal{P}(n,d)$}

\begin{align}
\mathcal{P}(n,d)= 
\begin{cases}
\mathcal{S}(n,d), \text{ if $\frac{d}{2}$ divides $n$ } \\
\mathcal{S}(n,d) \setminus \{P_{\lceil\frac{2n}{d}\rceil}\} \cup \{P_{\lceil\frac{2n}{d}\rceil}^1,P_{\lceil\frac{2n}{d}\rceil}^2\}, \text{ otherwise.}
\end{cases}
\end{align}

\subsubsection*{Definition of $\mathcal{B}(n,d)$}

\paragraph*{$\frac{d}{2}$ divides $n$ :}
For each $i,j \in \left[\frac{2n}{d}\right]$, $i < j$,
let $B_{i,j}:P_i \cup P_j \rightarrow \{+1,-1\}$ denote a bicoloring, where 
\[B_{i,j}(x)=\begin{cases}
+1, \text{ if $x \in P_i$ } \\
-1, \text{ if $x \in P_j$.}
\end{cases}\]
Let $\mathcal{B}(n,d)=\{B_{i,j}|i,j \in \left[\frac{2n}{d}\right], i < j\}$ denote this set of bicolorings.

\paragraph*{$\frac{d}{2}$ does not divide $n$ :}
For each $i,j \in \left[\lceil\frac{2n}{d}\rceil-1\right]$, $i < j$,
let $B_{i,j}:P_i \cup P_j \rightarrow \{+1,-1\}$ denote a bicoloring, where 
\[B_{i,j}(x)=\begin{cases}
+1, \text{ if $x \in P_i$ } \\
-1, \text{ if $x \in P_j$.}
\end{cases}\]

Let $B_{1,\lceil\frac{2n}{d}\rceil}: P_1 \cup P_{\lceil\frac{2n}{d}\rceil}^2 \rightarrow \{-1,1\}$ and $B_{i,\lceil\frac{2n}{d}\rceil}: P_i \cup P_{\lceil\frac{2n}{d}\rceil}^1 \rightarrow \{-1,1\}$, for $2 \leq i \leq \lceil\frac{2n}{d}\rceil-1$.

\[B_{1,\lceil\frac{2n}{d}\rceil}(x)=\begin{cases}
+1, \text{ if $x \in P_1$ } \\
-1, \text{ if $x \in P_{\lceil\frac{2n}{d}\rceil}^2$}
\end{cases}\].

\[B_{i,\lceil\frac{2n}{d}\rceil}(x)=\begin{cases}
+1, \text{ if $x \in P_i$ } \\
-1, \text{ if $x \in P_{\lceil\frac{2n}{d}\rceil}^1$}
\end{cases} \text{, for $2 \leq i \leq \left\lceil\frac{2n}{d}\right\rceil-1$.}\]

Let $\mathcal{B}(n,d)=\{B_{i,j}|i,j \in \left[\lceil\frac{2n}{d}\rceil\right], i < j\}$ denote this set of bicolorings. 

\subsubsection*{Definition of $\mathcal{D}(n,d)$}
$\mathcal{D}(n,d)= \{D_k|D_k = P_{2k-1} \cup P_{2k}, k \in \left[\lceil\frac{n}{d}\rceil-1\right]\}\cup \{ D_{\lceil\frac{n}{d}\rceil}\} $,
where
\begin{align}
D_{\lceil\frac{n}{d}\rceil} = \begin{cases}
P_{\frac{2n}{d}-1} \cup P_{\frac{2n}{d}}, \text{ if $\frac{d}{2}$ divides $n$ }\\
P_1 \cup P_{\lceil\frac{2n}{d}\rceil}^2, \text{ if $\frac{d}{2}$ does not divide $n$ and $\lceil\frac{2n}{d}\rceil$ is odd }\\
P_{\lceil\frac{2n}{d}\rceil-1} \cup P_{\lceil\frac{2n}{d}\rceil}^2, \text{ if $\frac{d}{2}$ does not divide $n$ and $\lceil\frac{2n}{d}\rceil$ is even. }
\end{cases}
\end{align}

%Let $\mathcal{D}=\{D_k|D_k = P_{2k-1} \cup P_{2k}, k \in [\frac{n}{d}]\}$.
%
%
%\subsubsection*{$\frac{d}{2}$ does not divide $n$ --}
%
%Let $\mathcal{P}(n,d)=\mathcal{S}(n,d) \setminus \{P_{\lceil\frac{2n}{d}\rceil}\} \cup \{P_{\lceil\frac{2n}{d}\rceil}^1,P_{\lceil\frac{2n}{d}\rceil}^2\}$.
% For each $i,j \in [\lceil\frac{2n}{d}\rceil-1]$, $i < j$,
% let $B_{i,j}:P_i \cup P_j \rightarrow \{+1,-1\}$ denote the bicoloring, where 
% \[B_{i,j}(x)=\begin{cases}
% +1, \text{ if $x \in P_i$ } \\
% -1, \text{ if $x \in P_j$.}
% \end{cases}\]
%\[B_{1,\lceil\frac{2n}{d}\rceil}(x)=\begin{cases}
%+1, \text{ if $x \in P_1$ } \\
%-1, \text{ if $x \in P_{\lceil\frac{2n}{d}\rceil}^2$}
%\end{cases}\].
%
%
%\[B_{i,\lceil\frac{2n}{d}\rceil}(x)=\begin{cases}
%+1, \text{ if $x \in P_i$ } \\
%-1, \text{ if $x \in P_{\lceil\frac{2n}{d}\rceil}^1$}
%\end{cases} \text{, for $2 \leq i \leq \left\lceil\frac{2n}{d}\right\rceil-1$.}\]
%
%Let $\mathcal{B}(n,d)=\{B_{i,j}|i,j \in [\lceil\frac{2n}{d}\rceil], i < j\}$ denote this set of bicolorings. 
%
%\paragraph*{$\lceil\frac{2n}{d}\rceil$ is odd -- }
%Let $\mathcal{D}(n,d)=\{D_k|D_k = P_{2k-1} \cup P_{2k}, k \in [\lceil\frac{n}{d}\rceil-1]\} \cup \{ D_{\lceil\frac{n}{d}\rceil}\} $, where
%$D_{\lceil\frac{n}{d}\rceil}=P_1 \cup P_{\lceil\frac{2n}{d}\rceil}^2$.
%
%\paragraph*{$\lceil\frac{2n}{d}\rceil$ is even -- }
%Let $\mathcal{D}(n,d)=\{D_k|D_k = P_{2k-1} \cup P_{2k}, k \in [\lceil\frac{n}{d}\rceil-1]\} \cup \{ D_{\lceil\frac{n}{d}\rceil}\} $, 
%where $D_{\lceil\frac{n}{d}\rceil}=P_{\lceil\frac{2n}{d}\rceil-1} \cup P_{\lceil\frac{2n}{d}\rceil}^2$.

\begin{proof}
If $d=n-1$, the statement of the lemma follows directly from Theorem \ref{thm:n-1}. So, we assume that
$d < n-1$ in the rest of the proof.
We prove this lemma considering the exhaustive cases based on whether $d$ is even or odd, separately.

\subsubsection*{Case 1. $d$ is even}

Let $\mathcal{P}=\mathcal{P}(n,d)$, $\mathcal{B}=\mathcal{B}(n,d)$ and $\mathcal{D}=\mathcal{D}(n,d)$.

\begin{observation}
	For any $C \subseteq [n]$, $|C| \geq 2$, if $|C \cap P| \leq 1$, for all $P \in \mathcal{P}$,
	then $C$ is induced-bisected by at least one $B \in \mathcal{B}$.
	\label{claim:cm1}
\end{observation}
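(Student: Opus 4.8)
The plan is to unwrap the definitions of $\mathcal{P}=\mathcal{P}(n,d)$ and $\mathcal{B}=\mathcal{B}(n,d)$ and use the hypothesis that $C$ meets every block $P\in\mathcal{P}$ in at most one vertex. First I would fix such a $C$ with $|C|\ge 2$ and pick two distinct vertices $a,b\in C$. Since each block of $\mathcal{P}$ contains at most one vertex of $C$, the vertices $a$ and $b$ lie in two \emph{different} blocks, say $a\in P_i$ and $b\in P_j$ with $i\ne j$; without loss of generality $i<j$. The bicoloring $B_{i,j}\in\mathcal{B}$ colors all of $P_i$ with $+1$ and all of $P_j$ with $-1$ (or the analogous statement involving $P^1_{\lceil 2n/d\rceil}$ or $P^2_{\lceil 2n/d\rceil}$ when $j$ is the last index and $d/2$ does not divide $n$ — I would note that these augmented blocks are still unions of pieces of the partition, so ``$|C\cap P|\le 1$ for all $P\in\mathcal{P}$'' still forces $C$ to meet each of them in at most one vertex).

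Next I would compute $|C\cap B_{i,j}(+1)|$ and $|C\cap B_{i,j}(-1)|$. The domain of $B_{i,j}$ is $P_i\cup P_j$, so the only vertices of $C$ that receive a color are those in $C\cap(P_i\cup P_j)$. By the hypothesis, $|C\cap P_i|\le 1$ and $|C\cap P_j|\le 1$; but we already exhibited $a\in C\cap P_i$ and $b\in C\cap P_j$, so in fact $|C\cap P_i|=|C\cap P_j|=1$. Hence $|C\cap B_{i,j}(+1)|=1=|C\cap B_{i,j}(-1)|\ne 0$, which is exactly the condition for $C$ to be induced-bisected by $B_{i,j}$ in the sense of Definition~\ref{def:2}. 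This proves the observation.

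The argument is essentially a one-line pigeonhole once the definitions are in place, so there is no real obstacle; the only point requiring a little care is the boundary case where $d/2$ does not divide $n$. There, $\mathcal{P}(n,d)$ uses the two overlapping blocks $P^1_{\lceil 2n/d\rceil}$ and $P^2_{\lceil 2n/d\rceil}$, and the bicolorings indexed by the last coordinate are defined on $P_i\cup P^1_{\lceil 2n/d\rceil}$ or $P_1\cup P^2_{\lceil 2n/d\rceil}$ rather than on a pair of genuine partition classes. I would simply observe that each of $P^1_{\lceil 2n/d\rceil}$ and $P^2_{\lceil 2n/d\rceil}$ is one of the blocks of $\mathcal{P}(n,d)$ in that case, so the hypothesis applies verbatim to them, and the same counting gives exactly one $+1$ vertex and exactly one $-1$ vertex of $C$ under the chosen bicoloring. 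Thus in all cases the chosen $B_{i,j}$ induced-bisects $C$.
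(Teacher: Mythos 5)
Your proof is correct and follows essentially the same route as the paper's: since $|C|\ge 2$ and each block of $\mathcal{P}$ meets $C$ in at most one vertex, two vertices of $C$ lie in distinct blocks $P_i,P_j$, and the corresponding $B_{i,j}$ colors exactly one vertex of $C$ with $+1$ and one with $-1$. Your extra care with the case where $d/2$ does not divide $n$ (checking that the hypothesis applies verbatim to the augmented blocks $P^1_{\lceil 2n/d\rceil}$ and $P^2_{\lceil 2n/d\rceil}$, and that the relevant domains are still disjoint unions) is a welcome elaboration of a step the paper leaves implicit.
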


For any $C \subseteq [n]$, $|C| \geq 2$,
it follows from the premise that there exist $P_i,P_j \in \mathcal{P}$, $i < j$, such that
$|C \cap P_i|=|C \cap P_j|=1$. $C$ is induced-bisected by the bicoloring $B_{i,j}$, thus completing the proof of Observation \ref{claim:cm1}.

Let $\mathcal{C}$ denote the family of all the subsets of $[n]$ that are not induced-bisected by any $B \in \mathcal{B}$.
Rephrasing Observation \ref{claim:cm1}, for each $C \in \mathcal{C}$,
there exists a $P \in \mathcal{P}$ (and thus, a $D \in \mathcal{D}$) such that $|C \cap P| \geq 2$ (respectively, $|C \cap D| \geq 2$).
%For any $D \subseteq [n]$, let $\max(D)$ denotes the maximum integer in the set $D$.
Let $\mathcal{D}'=\{D \cup \{j\}| j \in [n]\setminus D, D \in \mathcal{D}\}$.
Recall that $|D|=d$, where $d$ is an even integer less than $n-1$. So, each $D' \in \mathcal{D}'$ is a $(d+1)$-sized set. 
Using Corollary \ref{prop:intersect},
every $C \in \mathcal{C}$ can be induced-bisected using $|\mathcal{D}|(d+1)$ bicolorings.
Therefore, we have, $\beta^d(n) \leq |\mathcal{B}|+|\mathcal{D}|(d+1)=\binom{\lceil\frac{2n}{d} \rceil}{2}+\lceil \frac{n}{d}\rceil(d+1)$, when $d$ is even.

\subsubsection*{Case 2. $d$ is odd}

Let $\mathcal{P}=\mathcal{P}(n-1,d-1)$, $\mathcal{B}=\mathcal{B}(n-1,d-1)$ and $\mathcal{D}=\mathcal{D}(n-1,d-1)$.
Since $d-1$ is even, $\mathcal{P}$, $\mathcal{B}$ and $\mathcal{D}$ are well defined.
We extend the domain of each $B \in \mathcal{B}$ to $domain(B) \cup\{n\}$, and assign a +1 color to $n$ in each $B$.
Now, each $B \in \mathcal{B}$ colors exactly $d$ elements of $[n]$.

\begin{observation}
	For any $C \subseteq [n]$ with  $|C| \geq 2$, if $n \not\in C$ and $|C \cap P| \leq 1$ for all $P \in \mathcal{P}$,
	then $C$ is induced-bisected by at least one $B \in \mathcal{B}$.
	\label{claim:cm2}
\end{observation}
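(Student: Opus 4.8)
The plan is to prove Observation~\ref{claim:cm2} by directly mimicking the argument used for Observation~\ref{claim:cm1} in Case~1, while carefully tracking the role of the newly added vertex $n$. First I would fix an arbitrary $C \subseteq [n]$ with $|C| \geq 2$ satisfying the hypotheses $n \notin C$ and $|C \cap P| \leq 1$ for all $P \in \mathcal{P} = \mathcal{P}(n-1,d-1)$. Since $n \notin C$, the set $C$ is in fact a subset of $[n-1]$, and since $\mathcal{P}(n-1,d-1)$ partitions $[n-1]$, the hypothesis $|C \cap P| \le 1$ for all $P$ together with $|C| \geq 2$ forces the existence of two distinct parts $P_i, P_j \in \mathcal{P}$, with $i < j$, such that $|C \cap P_i| = |C \cap P_j| = 1$.

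Next I would consider the bicoloring $B_{i,j} \in \mathcal{B} = \mathcal{B}(n-1,d-1)$ after its domain has been extended by the vertex $n$ (colored $+1$). By the definition of $B_{i,j}$, every element of $P_i$ (and hence the unique element of $C \cap P_i$) receives color $+1$, and every element of $P_j$ (hence the unique element of $C \cap P_j$) receives color $-1$; in the degenerate cases where $j = \lceil\frac{2(n-1)}{d-1}\rceil$ one uses $P_j^1$ or $P_j^2$ in place of $P_j$, but the sign assignment is unchanged and the argument is identical. Crucially, the extra vertex $n$ is not in $C$, so extending the domain of $B_{i,j}$ to include $n$ does not change $|C \cap B_{i,j}(+1)|$ or $|C \cap B_{i,j}(-1)|$. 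Therefore $|C \cap B_{i,j}(+1)| \geq 1$ and $|C \cap B_{i,j}(-1)| \geq 1$; in fact, since $|C \cap P| \le 1$ for every part $P$ and $C$ meets only the parts $P_i$ and $P_j$ among those colored by $B_{i,j}$, we get exactly $|C \cap B_{i,j}(+1)| = |C \cap B_{i,j}(-1)| = 1 \neq 0$, so $C$ is induced-bisected by $B_{i,j}$.

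The only subtlety worth spelling out — and the step I expect to be the mildest obstacle rather than a genuine difficulty — is bookkeeping around the ``leftover'' part $P_{\lceil\frac{2(n-1)}{d-1}\rceil}$ and its two variants $P^1, P^2$ when $\frac{d-1}{2}$ does not divide $n-1$: one has to check that the hypothesis $|C \cap P| \le 1$ applied to the parts of $\mathcal{P}$ (which already incorporate $P^1$ and $P^2$) still yields two parts each meeting $C$ exactly once, and that for whichever of these two special parts arises, the corresponding bicoloring in $\mathcal{B}$ is still defined and still assigns a single fixed sign to the whole part. Since $\mathcal{B}(n-1,d-1)$ was defined to contain a bicoloring $B_{i,j}$ for every pair $i < j$ of indices in $[\lceil\frac{2(n-1)}{d-1}\rceil]$ (with the special last index handled via $P^1, P^2$), this is automatic. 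I would close by remarking that this is exactly the $n \notin C$ analogue of Observation~\ref{claim:cm1}, and that the complementary case $n \in C$ will be handled separately in the remainder of Case~2 (presumably by an analogous argument using a companion family of parts colored $-1$, or by reducing to the even case on $C \setminus \{n\}$), so the present observation completes one half of the case analysis.
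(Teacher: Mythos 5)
Your proof is correct and follows essentially the same route as the paper, which simply notes that the argument is identical to that of Observation \ref{claim:cm1}: since $n \notin C$, the two parts meeting $C$ give a bicoloring $B_{i,j}$ that induced-bisects $C$, and the extra $+1$-colored vertex $n$ is irrelevant. Your extra bookkeeping about the leftover parts $P^1,P^2$ is sound but not something the paper spells out either.
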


The proof of this observation is exactly the same as the proof of Observation \ref{claim:cm1}.

Let $\mathcal{C}$ denote the family of all the subsets of $[n]$ that are not induced-bisected by any $B \in \mathcal{B}$.
For any $D \subseteq [n]$, let $\max(D)$ denote the maximum integer in the set $D$.
Let $\mathcal{D}'=\{D \cup \{n\} \cup \{\max(D)+1\}| D \in \mathcal{D}\}$, where the addition is modulo $n-1$.

\begin{observation}\label{obs:lemma2}
Let $\mathcal{D}'=\{ D_1',D_2',...,D_{\lceil\frac{n-1}{d-1}\rceil}' \}$ be the family of subsets constructed as above.
Then, $|D_i' \cap D_{i+1}'|=2$, if $1 \leq i \leq \lceil\frac{n-1}{d-1}\rceil-1$, and 
$|D_{\lceil\frac{n-1}{d-1}\rceil}' \cap D_{1}'| \geq 2$.
\end{observation}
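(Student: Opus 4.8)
The plan is to exploit two structural features of the sets in $\mathcal{D}'$. First, every member of $\mathcal{D}'$ contains the vertex $n$, so $n$ is automatically a common element of every pairwise intersection; it therefore suffices to produce \emph{exactly one} further common vertex for each consecutive pair $(D_i',D_{i+1}')$, and \emph{at least one} further common vertex for the wrap-around pair. Second, each $D_i'=D_i\cup\{n\}\cup\{\max(D_i)+1\}$ is built from a block $D_i$ consisting of consecutive partition classes $P_j$ (so $D_i$ is a consecutive interval of $[n-1]$ of length $d-1$, except for the special last block), together with the single ``next'' vertex $\max(D_i)+1$, where addition is modulo $n-1$.

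For $1\le i\le \lceil\frac{n-1}{d-1}\rceil-1$ with $D_i,D_{i+1}$ both regular blocks, I would note that $D_i=P_{2i-1}\cup P_{2i}$ and $D_{i+1}=P_{2i+1}\cup P_{2i+2}$ are disjoint, while the vertex $\max(D_i)+1$ adjoined to $D_i'$ is precisely $\min(D_{i+1})$ and hence lies in $D_{i+1}\subseteq D_{i+1}'$. Since $|D_{i+1}|=d-1\ge 2$, this vertex is distinct from $n$ and from $\max(D_{i+1})+1$; moreover $\max(D_{i+1})+1=\min(D_{i+2})$ exceeds every element of $D_i$ and so does not lie in $D_i'$. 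Putting these together gives $D_i'\cap D_{i+1}'=\{n,\min(D_{i+1})\}$, of size exactly $2$. For the wrap-around pair $(D_{\lceil\frac{n-1}{d-1}\rceil}',D_1')$ I would run through the three cases in the definition of $D_{\lceil\frac{n-1}{d-1}\rceil}$ (taken with parameters $n-1$, $d-1$): when $\frac{d-1}{2}$ divides $n-1$, the last block ends at vertex $n-1$, so $\max(D_{\lceil\cdot\rceil})+1\equiv 1\pmod{n-1}=\min(D_1)\in D_1$; in the other two cases $D_{\lceil\cdot\rceil}$ contains $P_1$, respectively the nonempty padding class $Q_2\subseteq P_2$, and both $P_1$ and $P_2$ are contained in $D_1=P_1\cup P_2$. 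In every case the intersection contains $n$ together with at least one further vertex, so its size is at least $2$.

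The main obstacle is the bookkeeping at the special last block. Its three-case definition — and in particular the sub-case where the number of partition classes is odd, forcing $D_{\lceil\cdot\rceil}$ to reuse a class already present in $D_{\lceil\cdot\rceil-1}$ — must be checked carefully to confirm that the pair $(D_{\lceil\cdot\rceil-1}',D_{\lceil\cdot\rceil}')$ still meets in precisely the two vertices $n$ and $\min(D_{\lceil\cdot\rceil})$, and that the wrap-around boundary vertex $\max(D_{\lceil\cdot\rceil})+1$ does not coincide with an interior vertex of $D_1$. These are finite verifications directly from the explicit partition, but this is exactly where all the case distinctions built into the definitions of $\mathcal{P}(n,d)$ and $\mathcal{D}(n,d)$ are needed, so I expect the write-up to spend most of its length there.
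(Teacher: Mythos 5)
The paper itself offers no proof of this observation (it is stated as following from the construction), so your outline of the generic mechanism --- every $D_i'$ contains $n$, and for two genuinely disjoint consecutive blocks the only other common element is $\max(D_i)+1=\min(D_{i+1})$ --- is exactly the intended argument, and it is correct for all pairs $(D_i',D_{i+1}')$ with $i+1\leq \lceil\frac{n-1}{d-1}\rceil-1$. Your treatment of the wrap-around pair is also fine: $n$ plus one of $\max(D_{\lceil\cdot\rceil})+1$, $P_1$, or $Q_2\subseteq P_2$ always gives at least two common elements.

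The genuine gap is the step you defer: the pair $(D_{\lceil\cdot\rceil-1}',D_{\lceil\cdot\rceil}')$ does \emph{not} always ``still meet in precisely the two vertices $n$ and $\min(D_{\lceil\cdot\rceil})$,'' and no amount of careful bookkeeping will confirm it, because the equality is false in the very sub-case you single out. Take $n=11$, $d=5$, so one works with $\mathcal{P}(10,4)$: here $\frac{d-1}{2}=2$ divides $10$, the number of parts $\frac{2(n-1)}{d-1}=5$ is odd, and the definition gives $D_3=P_4\cup P_5$ while $D_2=P_3\cup P_4$, so $D_2'\cap D_3'\supseteq P_4\cup\{9,11\}$ has at least four elements, not two. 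Similar overcounts occur when $\frac{d-1}{2}\nmid(n-1)$ and the padding set $Q_2\subseteq P_2$ drags part of $D_1$ into the last block. What survives every case is only the inequality $|D_i'\cap D_{i+1}'|\geq 2$ (the element $\max(D_{\lceil\cdot\rceil-1})+1=\min(P_{2\lceil\cdot\rceil-1})$ lies in the last block under all three branches of its definition), and that weaker statement is all the subsequent argument in Lemma \ref{lemma:up} actually uses. So you should either prove only the ``$\geq 2$'' form throughout, or restrict the ``$=2$'' claim to indices where both blocks are regular; as written, your plan commits you to verifying an equality that fails.
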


Recall that each $D \in \mathcal{D}$ is a $(d-1)$-sized subset of $[n-1]$, where $d$ is an odd integer less than $n-1$.
So, each $D' \in \mathcal{D}'$ is a $(d+1)$-sized set. 
From Observation \ref{claim:cm2}, it follows that 
for each $C \in \mathcal{C}$,
there exists at least one $D' \in \mathcal{D}'$ such that $|C \cap D'| \geq 2$.
Let $\mathcal{C}' \subseteq \mathcal{C}$ be the family of subsets of $[n]$ such that
for each $C' \in \mathcal{C}'$, there exists some $D' \in \mathcal{D}'$ such that $2 \leq |C \cap D'| \leq d$. 
Using Corollary \ref{prop:intersect},
we can obtain an induced-bisecting family for members of $\mathcal{C}'$ of cardinality $|\mathcal{D}|(d+1)$.
So, it follows that any $C \in \mathcal{C} \setminus \mathcal{C}'$ must contain one or more elements from
$\{ D_1',D_2',...,D_{\lceil\frac{n-1}{d-1}\rceil}' \}$ as its subsets.

For any $C \in \mathcal{C} \setminus \mathcal{C}'$, if $D_{i}' \subseteq C$,
then $D_{i+1}'\subseteq C$: otherwise, from Observation \ref{obs:lemma2}, $2 \leq |C \cap D_{i+1}'| \leq d$ and from definition of $\mathcal{C}'$, $C \in \mathcal{C}'$.
So, it follows that $\mathcal{C} \setminus \mathcal{C}'=\{[n]\}$, and $[n]$ is a trivial set when $d$ is odd.
Therefore, the cardinality of the induced-bisecting family for $[n]$ when $d$ is odd is at most
$|\mathcal{B}|+|\mathcal{D}|(d+1)=\binom{\lceil\frac{2(n-1)}{d-1} \rceil}{2}+\lceil\frac{n-1}{d-1}\rceil (d+1)$.

  \qed	
\end{proof}

\subsection{Proof of Theorem \ref{thm:main}}

\begin{statement}
	Let $2 \leq d \leq n$, where $d$ and $n$ are integers. Then,
$\frac{2n(n-1)}{d^2} \leq \beta^d(n) \leq \binom{\lceil\frac{2(n-1)}{d-1} \rceil}{2}+\lceil\frac{n-1}{d-1}\rceil (d+1)$.
Moreover, $\beta^d(n) \geq n-1$, when $d$ is odd.
\end{statement}

\begin{proof}
Theorem \ref{thm:main} follows from Proposition \ref{prop:low}, Lemma \ref{lemma:lin} and Lemma \ref{lemma:up}.
\qed
\end{proof}

\begin{remark}
	By removing some duplicate bicolorings, we can actually improve the upper bound for $\beta^d(n)$ from 
	$\binom{\lceil\frac{2(n-1)}{d-1} \rceil}{2}+\lceil\frac{n-1}{d-1}\rceil (d+1)$ to $\binom{\lceil\frac{2(n-1)}{d-1} \rceil}{2}+\lceil\frac{n-1}{d-1}\rceil d$.
\end{remark}

Theorem \ref{thm:main} asserts an upper bound of $O(n)$ on $\beta^d(n)$ when $d \in \Omega(\sqrt{n})$.
Let $k(G)$ denote the minimum cardinality of any hyperedge of the hypergraph $G$, i.e., $k(G)= \min_{e \in E(G)} |e|$.
For any hypergraph $G$, the upper bound for $\beta^d(G)$ can be improved to $O(n)$ even if 
$d \in o(\sqrt{n})$ provided $(d-1)k(G) > n-1$ in the following way.
Since $(d-1)k(G) > n-1$, 
%We construct the family $\mathcal{D}=\mathcal{D}(n,d)$ ($\mathcal{D}(n-1,d-1)$) when $d$ is even (respectively, odd)  and construct the family $\mathcal{D}'$ based on $\mathcal{D}$ as done in the proof of Lemma \ref{lemma:up}.
%Since $(d-1)\cdot k(G) > n-1$, it follows that $\mathcal{D}'$ satisfies all the requirements 
every hyperedge is large enough so that the family $\mathcal{D}'$ constructed in all the cases of proof of Lemma \ref{lemma:up}
satisfies the conditions of the family requirements of Corollary \ref{cor:1}.
Therefore,
the set of bicolorings given by $\mathcal{B}=\mathcal{B}(n,d)$ (or $\mathcal{B}(n-1,d-1)$)
can be completely avoided.
% in the proof of Lemma \ref{lemma:up}.
%When $d$ is even,
%consider a partition $\mathcal{P}=\{P_1,\ldots,P_{\lceil \frac{n}{d+1}\rceil}\}$ of $[n]$, where $P \in \mathcal{P}\setminus P_{\lceil \frac{n}{d}\rceil}$ has cardinality $d+1$, $|P_{\lceil \frac{n}{d}\rceil}| \leq d+1$.
%For any $A \in E(G)$, since $|A| > \frac{n}{d+1}$, there is some $P \in \mathcal{P}$ such that $|A \cap P| \geq 2$.
%So, using Corollary \ref{prop:intersect},
%with $\lceil \frac{n}{d+1} \rceil(d+1)$ colorings, we obtain an induced-bisecting family. 
%When $d$ is odd, constructing a partition of $[n-1]$ consisting of $(d-1)$-sized parts and following arguments as in the proof of 
%Lemma \ref{lemma:up}, we obtain 
Thus, we have the following theorem.

\begin{theorem}
For any hypergraph $G$, let $k(G) = \displaystyle\min_{e \in E(G)} |e|$.
If $(d-1)k(G) > n-1$, then
$\beta^d(G) \leq \lceil \frac{n-1}{d-1} \rceil(d+1)$.
\end{theorem}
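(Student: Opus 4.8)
The plan is to reuse, essentially verbatim, the construction from the proof of Lemma \ref{lemma:up}, but to show that the ``cheap'' part of the family, the bicolorings in $\mathcal{B}$, becomes unnecessary once the hyperedges of $G$ are guaranteed to be large. First I would observe that if $d$ is even we take $\mathcal{P}=\mathcal{P}(n,d)$, $\mathcal{D}=\mathcal{D}(n,d)$, and form $\mathcal{D}'$ exactly as before; if $d$ is odd we take the $(n-1,d-1)$ versions and form $\mathcal{D}'$ by the ``$\cup\{n\}\cup\{\max(D)+1\}$'' trick. In either case $|\mathcal{D}'| = \lceil \tfrac{n-1}{d-1}\rceil$ and each $D'\in\mathcal{D}'$ has exactly $d+1$ vertices, so applying the construction of Theorem \ref{thm:n-1} to each $D'$ produces a candidate induced-bisecting family of order $d$ of size $|\mathcal{D}'|(d+1) = \lceil\tfrac{n-1}{d-1}\rceil(d+1)$, which is the claimed bound.

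The core step is to verify that this family already induced-bisects every hyperedge $e\in E(G)$, i.e. that $\mathcal{B}$ is genuinely redundant here. The key combinatorial point is a covering claim: the sets $D_1,\dots,D_{\lceil n/d\rceil}$ (resp. $D_1',\dots$) are built from consecutive blocks $P_{2k-1}\cup P_{2k}$ of the partition $\mathcal{P}$, so their union is all of $[n]$ and, more importantly, $|\mathcal{D}|\le \lceil\tfrac{n-1}{d-1}\rceil$. Since $(d-1)k(G) > n-1$, every hyperedge $e$ has $|e| = k(G) \ge |e| > \tfrac{n-1}{d-1} \ge |\mathcal{D}'|$, hence by pigeonhole $e$ must meet some $D'\in\mathcal{D}'$ in at least two vertices. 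When $d$ is even this is all we need: by Theorem \ref{thm:n-1}, $\mathcal{X}_{D'}$ induced-bisects every non-trivial subset of $D'$, in particular $e\cap D'$, and therefore $e$. When $d$ is odd we additionally have to rule out the bad case $|e\cap D'| = d+1$, i.e. $D'\subseteq e$; here I would lean on Observation \ref{obs:lemma2} (consecutive $D_i'$ overlap in exactly $2$ vertices and the cyclic wrap-around overlap is $\ge 2$) to propagate $D_i'\subseteq e \Rightarrow D_{i+1}'\subseteq e$ around the cycle, forcing $e=[n]$, which is trivial when $d$ is odd — exactly the argument already run at the end of the proof of Lemma \ref{lemma:up}. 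So in all cases $2\le |e\cap D'|\le d$ for some $D'$, and Corollary \ref{prop:intersect}/Theorem \ref{thm:n-1} finishes.

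The main obstacle, and the step deserving the most care, is the pigeonhole count: one has to be precise about whether the relevant quantity is $|\mathcal{D}'| = \lceil\tfrac{n-1}{d-1}\rceil$ or $\lceil\tfrac{n}{d}\rceil$ (these differ by the even/odd split and by ceilings), and to confirm that $(d-1)k(G) > n-1$ really does force $k(G) > |\mathcal{D}'|$ rather than merely $k(G)\ge |\mathcal{D}'|$; a strict inequality is what guarantees an intersection of size $\ge 2$ rather than $\ge 1$. I expect this to go through because $(d-1)\lceil\tfrac{n-1}{d-1}\rceil \ge n-1$, so $k(G) > \tfrac{n-1}{d-1}$ gives $k(G) \ge \lceil\tfrac{n-1}{d-1}\rceil + $ a margin, but it should be written out. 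The remaining pieces — that each $D'$ has size exactly $d+1$, that Theorem \ref{thm:n-1} applies to it, and the odd-$d$ wrap-around argument — are all either routine or already established in the excerpt, so the proof is essentially a short deduction once the counting is pinned down.
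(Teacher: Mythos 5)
Your proposal follows the paper's own argument exactly: the paper's proof of this theorem is just the paragraph preceding it, which observes that under $(d-1)k(G)>n-1$ the family $\mathcal{D}'$ built in the proof of Lemma~\ref{lemma:up} already satisfies the hypotheses of Corollary~\ref{prop:intersect}, so the bicolorings in $\mathcal{B}$ can be discarded and only the $|\mathcal{D}'|(d+1)\le\lceil\frac{n-1}{d-1}\rceil(d+1)$ bicolorings from the circular construction of Theorem~\ref{thm:n-1} remain; your treatment of the odd-$d$ case ($D'\subseteq e$ forces $e=[n]$, which is trivial) is likewise the argument already run in Lemma~\ref{lemma:up}. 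You actually supply more detail than the paper, and the delicate point you flag is genuine: the chain $|e|>\frac{n-1}{d-1}\ge|\mathcal{D}'|$ is written backwards, since $|\mathcal{D}'|=\lceil\frac{n-1}{d-1}\rceil\ge\frac{n-1}{d-1}$, so when $(d-1)\nmid(n-1)$ the hypothesis only gives $k(G)\ge|\mathcal{D}'|$ and bare pigeonhole over a covering family does not by itself force an intersection of size two; closing this requires the overlap structure of $\mathcal{D}'$ (in the odd case every $D'$ contains the vertex $n$ and consecutive members share two vertices). The paper glosses over this entirely, so your write-up is at least as rigorous as the original, but if you want a complete proof you should spell out that last counting step rather than appeal to ``a margin.''
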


\begin{remark}
The proof of Theorem \ref{thm:main} is algorithmic: it yields an induced bisecting family of cardinality at most 
$\binom{\lceil\frac{2(n-1)}{d-1} \rceil}{2}+\lceil\frac{n-1}{d-1}\rceil (d+1)$ with a running time of
$O(\frac{n^2}{d^2}+n)$. Observe that the running time of our algorithm is asymptotically equivalent to the cardinality of the family of bicolorings it outputs.
Therefore, the asymptotic running time of our algorithm is optimal whenever it outputs an asymptotically optimal solution.
Recall that Theorem \ref{thm:main} asserts tight bounds for $\beta^d(n)$ except for the even values of $d \in \Omega(n^{0.5+\epsilon})$, for any $\epsilon$, $0< \epsilon \leq 0.5$.
\end{remark}

We note that if $d=O(1)$, then Theorem \ref{thm:main} asserts that $\beta^d(n)=\Theta(n^2)$. However, the corresponding coefficients are not the same: the constant factor in the $\Omega(n^2)$ lower bound is $\frac{2}{d^2}$ whereas the constant factor in the upper bound is $\frac{2}{(d-1)^2}$. It would be interesting to determine the exact coefficient in this case.
Moreover, when $d$ is even and $d \in \Omega(n^{0.5+\epsilon})$, for any $\epsilon$, $0 < \epsilon \leq 0.5$, 
we have an upper bound of $O(n)$ on $\beta^d(n)$; the lower bound for this case is $o(n)$.
We  believe that $\beta^d(n)$ is more close to the upper bound and tightening of the bound for 
$\beta^d(n)$  in this case remains open.

{\small
	\bibliography{refer}}
\bibliographystyle{plain}

\appendix
\section{Proof of Theorem \ref{thm:n-1} when $d$ is odd} \label{app:2}
\begin{statement}
	$d \leq \beta^d(d+1) \leq d+1$, $d$ is an odd integer.
\end{statement}

\begin{proof}
As in the proof of Theorem \ref{thm:n-1}, the slots $P_1$ to $P_{\frac{d+1}{2}-1}$ are colored with +1, slots $P_{\frac{d+1}{2}+1}$ to $P_{d+1}$ are colored with -1, and only slot $P_{\frac{d+1}{2}}$ remains uncolored.
%If we generate the bicolorings $\{X_1,\ldots,X_n\}$ as in the proof of
%Theorem \ref{thm:n-1}, by similar arguments, it can be shown that
%$\{X_1,\ldots,X_n\}$ is indeed an induce-bisecting family for
%all the non-trivial subsets of $[n]$: 
Note that along with the empty set and the singleton sets, the set $[d+1]$ becomes trivial under this restriction.
Each slot can contain exactly one vertex and each vertex takes the color of the slot it resides in.
As the initial configuration, let $v_i \in P_i$, for $1 \leq i \leq d+1$. This configuration gives the coloring $X_1$, where
(i) $X_1(+1)=\{v_1,\ldots, v_{\frac{d+1}{2}-1}\}$, (ii) $X_1(-1)=\{v_{\frac{d+1}{2}+1},\ldots, v_{d+1}\}$, and, (iii) the vertex $v_{\frac{d+1}{2}+1}$ remains uncolored.
We obtain the second coloring $X_2$ from $X_1$ by one clockwise rotation of the 
vertices in the circular arrangement.
Therefore, we have, $X_2(+1)=\{v_{d+1},v_1,\ldots, v_{\frac{d+1}{2}-2}\}$, $X_2(-1)=\{v_{\frac{d+1}{2}},\ldots, v_{d}\}$; the vertex $v_{\frac{d+1}{2}-1}$ remains uncolored.
%So, in $X_2$, $P_i$ contains the element $v_{i-1}$, $2 \leq i \leq n$, $P_1$ contains $v_n$.
%See Figure \ref{fig:upp1} for an illustration.
Similarly, repeating the rotation $d$ times, we obtain the set $\mathcal{X}=\{X_1,\ldots,X_{d+1}\}$  of bicolorings.

The proof for $\mathcal{X}$ being an induced-bisecting family for any odd hyperedge $A_o \subsetneq [d+1]$ is exactly similar to that given in the proof of Theorem \ref{thm:n-1}.
So, we consider only the even hyperedges.
Let $c_i=|A \cap X_{i+1}(+1)|-|A \cap X_{i+1}(-1)|$, for $0 \leq i \leq d$.
All additions/subtractions in the subscript of $c$ are modulo $d+1$.
For the sake of contradiction, assume that $A$ is an even hyperedge not induced-bisected by $\mathcal{X}$.
If we can show that some $c_j$, $0 \leq j \leq d$, is zero, then we get the desired contradiction. 

\begin{observation}\label{obs:ltrep}
	$|c_i - c_{i+1}| \leq 2$,  for $0 \leq i \leq d$. 
\end{observation}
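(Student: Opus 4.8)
The plan is to compute directly how the discrepancy $c_i = |A \cap X_{i+1}(+1)| - |A \cap X_{i+1}(-1)|$ changes under the single clockwise rotation that turns $X_{i+1}$ into $X_{i+2}$. The crucial structural observation is that throughout the construction the \emph{slot} colours are fixed while the vertices rotate, so a vertex changes its colour or status from one bicoloring to the next exactly when it moves across a boundary between two slots of different colour. In the present pattern the colours are $+1$ on $P_1,\dots,P_{\frac{d+1}{2}-1}$, uncoloured on $P_{\frac{d+1}{2}}$, and $-1$ on $P_{\frac{d+1}{2}+1},\dots,P_{d+1}$, so the block structure leaves precisely three such boundaries: a $(+1\to\text{uncoloured})$ boundary at $P_{\frac{d+1}{2}-1}\to P_{\frac{d+1}{2}}$, an $(\text{uncoloured}\to -1)$ boundary at $P_{\frac{d+1}{2}}\to P_{\frac{d+1}{2}+1}$, and a $(-1\to +1)$ boundary at the wrap-around $P_{d+1}\to P_1$. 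Hence a single rotation alters the colour of at most the three vertices occupying these boundary slots in $X_{i+1}$, all other vertices staying inside a monochromatic block.

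I would then name those three vertices: let $u$, $w$, $z$ be the vertices in $P_{\frac{d+1}{2}-1}$, $P_{\frac{d+1}{2}}$, and $P_{d+1}$ under $X_{i+1}$. After the rotation, $u$ enters the uncoloured slot and so loses its $+1$; $w$ enters the first $-1$ slot and so acquires a $-1$; and $z$ enters the first $+1$ slot, switching from $-1$ to $+1$. Writing $[\,x\in A\,]$ for the indicator that equals $1$ when $x\in A$ and $0$ otherwise, these three transitions change $c$ by $-[\,u\in A\,]$, $-[\,w\in A\,]$ and $+2[\,z\in A\,]$ respectively (the last being $+2$ because $z$ simultaneously leaves the $-1$ count and joins the $+1$ count), so that
\[
c_{i+1}-c_i \;=\; -[\,u\in A\,]-[\,w\in A\,]+2[\,z\in A\,].
\]
Each bracket is $0$ or $1$, so the right-hand side lies in $\{-2,-1,0,1,2\}$ and therefore $|c_i-c_{i+1}|\le 2$. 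The boundary index $i=d$, corresponding to the step from $X_{d+1}$ back to $X_1=X_{d+2}$, is again a single clockwise rotation, so the identity and the bound hold there too, covering the full range $0\le i\le d$ with indices read modulo $d+1$.

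I do not expect a genuine obstacle here; the whole argument is a signed count of the three colour-changing vertices. The one point that must be handled carefully is the $(-1\to +1)$ vertex $z$, whose single move shifts $c$ by $2$ rather than $1$, since it both deletes a $-1$ and inserts a $+1$; this is precisely what makes $2$ (and not $1$) the correct bound. I would also briefly record that the three boundary vertices are distinct (they lie in distinct slots) and that the block structure of the colouring guarantees there are no other colour-changing transitions, which together justify restricting attention to $u$, $w$, $z$.
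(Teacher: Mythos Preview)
Your argument is correct and is precisely the computation that the paper leaves implicit: the paper merely states that the bound ``follows from the construction'' without spelling out the three boundary transitions, and your identification of the vertices $u,w,z$ together with the signed formula $c_{i+1}-c_i=-[\,u\in A\,]-[\,w\in A\,]+2[\,z\in A\,]$ is exactly the intended reasoning. Nothing further is needed.
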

The proof of Observation \ref{obs:ltrep} follows from the construction.
Consider the sequence $(c_i,c_{i+1},\ldots, \allowbreak c_{i+d+1})$, where $c_i \leq c_j$, $j \in \{i+1,\ldots, i+d+1\}$, and the addition is modulo $d+1$. 
Since there is a full rotation  of the vertex set with respect to the slots, it follows that (i) $c_i \leq 0$, and (ii) there exists another index $j$ such that $c_j$ is positive.
From Observation \ref{obs:ltrep}, it follows that if none of the $c_j$, $j \in \{0, \ldots d\}$, is zero, there exists an index $p$ such that $c_p =-1$ and $c_{p+1}=1$.
Note that $c_p=-1$ asserts that $A \cap P_{\frac{d+1}{2}}$
is non-empty. However, under this configuration, $c_{p+1}$ can never become 1.
This yields the desired contradiction.
\qed
\end{proof}

\end{document}